\documentclass[12pt,a4paper,reqno]{amsart}
\usepackage[all]{xy}
\usepackage{amsmath,amssymb,amsfonts,amsthm}
\usepackage{verbatim} %commenting package
\usepackage{graphicx} %pictures package
\usepackage{harvard}
\usepackage{amsfonts}
\usepackage{amsthm}
\usepackage{amsmath}
\usepackage{amscd}
\usepackage{mathtools}
\usepackage[latin2]{inputenc}
\usepackage{t1enc}
\usepackage[mathscr]{eucal}
\usepackage{indentfirst}
\usepackage{graphicx}
\usepackage{graphics}
\usepackage{pict2e}
\usepackage{epic}
\usepackage{faktor}
\usepackage{xfrac}   
\numberwithin{equation}{section}
\usepackage[margin=2.9cm]{geometry}
\usepackage{epstopdf} 
\usepackage{cite}
\usepackage{hyperref}
\usepackage{lipsum}

\theoremstyle{plain}
\newtheorem{Th}{Theorem}[section]
\newtheorem{Lemma}[Th]{Lemma}
\newtheorem{Cor}[Th]{Corollary}
\newtheorem{Prop}[Th]{Proposition}

 \theoremstyle{definition}

\newtheorem{?}[Th]{Problem}

\newcommand{\Addresses}{{% additional braces for segregating \footnotesize
  \bigskip
  \footnotesize

  \textsc{Department of Mathematics, The University of Chicago, Chicago, IL 60615}\par\nopagebreak
  \textit{E-mail address}: \texttt{nth@uchicago.edu}
}}
\numberwithin{equation}{section}

\begin{document}

\title{K$\ddot{\text{A}}$HLER-RICCI FLOW AND CONFORMAL SUBMERSION}

\author[HOAN NGUYEN]{NGUYEN THE HOAN}

\begin{abstract} \textit{We study singularity formation of K$\ddot{\text{a}}$hler-Ricci flow on a K$\ddot{\text{a}}$hler manifold that admits a horizontally homothetic conformal submersion into another K$\ddot{\text{a}}$hler manifold. We will derive necessary and sufficient conditions for the preservation of horizontally homothetic conformal submersion along the flow and establish the formation of type I singularity together with a standard splitting of the Cheeger-Gromov limit. This generalizes the setup of Calabi symmetry that was discussed in \cite{1} and \cite{2}, thus gives new proofs for the results listed there. }
\end{abstract}
\maketitle
\section{Introduction}
In \cite{2}, Song and Weinkove showed that for a K$\ddot{\text{a}}$hler-Ricci flow on a Hirzebruch surface (a $\mathbb{P}^1$ bundle over a projective space)  which satisfies certain rotational symmetry conditions (Calabi symmetry), then either it will collapse the $\mathbb{P}^1$ fibers, shrink the base to a point or contract the exceptional divisor. In all of these cases, the manifold considered as a metric spaces induced by K$\ddot{\text{a}}$hler metrics along the flow is known to perform a Gromov-Hausdorff convergence.  

The singularity behaviors of the flow in these cases are also of particular interests. Since it was proved in \cite{8} that the flow could only exist only for finite time interval $[0,T)$, the singularity model can only be of type I or II. Naturally for a finite time existence K$\ddot{\text{a}}$hler-Ricci flow, type I singularity is expected. The complete answers in the discussed settings were given by Fong in \cite{1}, Song in \cite{10} and Zhu in \cite{9}, corresponding to each of these above cases. For example, in the fibers collapsing case, Fong showed that the Cheeger-Gromov limit solution $(M_\infty,g_\infty(t))$ attained under the standard high curvature point-picking and metric normalizing must have the universal cover splitting isometrically as the product of a flat factor with a Riemannian surface. Therefore, if a type II singularity is formed, an analysis based on the asymptotic behavior of Riemannian curvatures shows that the universal cover of Cheeger-Gromov limit is product of $\mathbb{C}^n$ with a cigar soliton, contradicting to Perelman's local non-collapsing theorem.

One can show by direct calculation that for the set up of Calabi symmetry by Fong in \cite{1}, the bundle map under the flow always satisfies the condition of a horizontally homothetic conformal submersion, with dilation factors $f(\rho, t)$ evolve under a heat-type equation. The purpose of this paper is to study finite time singularity formation of K$\ddot{\text{a}}$hler-Ricci flow on manifold which admits a conformal submersion to another manifold, from this singularity results in \cite{1} follow. 

Inspired by the set up of Calabi symmetry, we consider the following scenario:
Suppose that $\pi: (M^{n+1},g_0)\rightarrow (B^n,h)$ is a horizontally homothetic conformal submersion with totally geodesic fibers between closed K$\ddot{\text{a}}$hler manifolds. Suppose that $(M^{n+1},g(t))$ is a finite solution of K$\ddot{\text{a}}$hler-Ricci flow on $M$ such that the following conditions hold:
\begin{enumerate}
    \item $\omega_0=\omega_{g_0}$
    \item The limiting K$\ddot{\text{a}}$hler class satisfies:
    \begin{equation}{\label{initial}}
        [\omega_0]+Tc_1(K_M)=[\pi^*\omega_B]
    \end{equation}
    where $T$ is the finite maximal time existence of the flow. 
     \item With respect to metric $g_t$ on $M$ and $h$ on $B$, the bundle map $\pi$ is a horizontally homothetic conformal submersion with dilation $f_t: M\rightarrow \mathbb{R}>0$, i.e. for $X,Y \in TM$ we have:
    \begin{equation}
        g_t(\mathcal{H}X,\mathcal{H}Y)=f_th(\pi_*X,\pi_*Y)
    \end{equation}
\end{enumerate}
Our first theorem gives a necessary and sufficient conditions under which the third condition satisfies, meaning that the horizontally homothetic conformal property of $\pi$ is preserved along the flow:
\begin{Th}\label{preservation of submersion}
Suppose that $\pi: (M^{n+1},g_0)\to (B^n,h)$ is a horizontally homothetic conformal submersion of dilation $f_0$ with totally geodesic fibers. Consider the K$\ddot{\text{a}}$hler-Ricci flow on $M$:
\[
    \dfrac{\partial}{\partial t}\omega=-\text{Ric}(\omega),\quad \omega(0)=\omega(g_0)
\]
Then the submersion $\pi$ remains being horizontally homothetic conformal submersion with totally geodesic fibers with respect to $(M,g(t))$ and $(B,h)$ if and only if the following conditions hold:
\begin{enumerate}
    \item $(B,h)$ is K$\ddot{\text{a}}$hler-Einstein manifold.
    \item The heat equation:
\[\dfrac{\partial f}{\partial t}=\Delta_t f-\dfrac{R_B}{n},\quad f(p,0)=f_0(p)\]
has solution $f(.,t)$ with vertical gradient for all time.
\end{enumerate}
\end{Th}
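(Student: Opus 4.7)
The plan is to compare two computations of $\partial_t g_t$ on horizontal, mixed, and vertical pairs, using structural formulas of Besse--O'Neill type for horizontally homothetic conformal submersions with totally geodesic fibers. On horizontal lifts $X, Y$ of vectors on $B$, the structural assumption reads $g_t(X, Y) = f_t\,\pi^* h(X,Y)$; together with the K\"ahler--Ricci flow equation $\partial_t g_t = -\text{Ric}(g_t)$ this gives
\[
(\partial_t f_t)\,\pi^* h(X, Y) = -\text{Ric}(g_t)(X, Y).
\]
The structural formula for $\text{Ric}(g_t)$ on horizontal vectors, specialized to the horizontally homothetic totally-geodesic-fiber case and using that $\nabla f_t$ is vertical, expresses the right-hand side as the pull-back $-\pi^*\text{Ric}_B(X,Y)$ plus a multiple of $\pi^* h(X,Y)$ that involves $f_t$, $|\nabla f_t|^2$, and $\Delta_t f_t$ but is independent of which horizontal pair one tests.

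For the necessity direction, one reads off from the horizontal identity that $\pi^*\text{Ric}_B$ is pointwise proportional to $\pi^* h$ with proportionality constant independent of the fiber point; surjectivity of $\pi$ together with the standing K\"ahler hypothesis upgrades this to $(B,h)$ being K\"ahler--Einstein, and tracing determines the Einstein constant as $R_B/n$. The scalar part of the identity then isolates $\partial_t f_t$ and produces exactly the heat equation
\[
\partial_t f = \Delta_t f - R_B/n.
\]
That $f_t$ has vertical gradient is nothing but the horizontally homothetic condition at time $t$, which must propagate. The mixed and vertical components of $\partial_t g_t = -\text{Ric}(g_t)$, checked against the same structure equations, are automatic under the standing assumptions: the mixed Ricci vanishes because $\nabla f_t$ is vertical and the fibers are totally geodesic, and the vanishing of the second fundamental form of the fibers is preserved because its time derivative, computed from $\text{Ric}$, is zero.

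For sufficiency, assume $(B,h)$ is K\"ahler--Einstein and $f_t$ solves the prescribed heat equation with vertical gradient preserved. Construct an auxiliary family $\tilde g_t$ on $M$ by prescribing its horizontal component to be $f_t\, \pi^* h$, declaring the horizontal and vertical distributions orthogonal, and evolving each fiber metric by the flow induced from the vertical projection of $\text{Ric}(\tilde g_t)$. The same Besse--O'Neill identity used in necessity, now read in reverse, shows $\partial_t \tilde g_t = -\text{Ric}(\tilde g_t)$, so $\tilde g_t$ is a K\"ahler--Ricci flow with initial data $g_0$. Uniqueness of the flow forces $\tilde g_t = g_t$, and the conformal submersion structure is preserved for all $t \in [0, T)$.

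The principal technical obstacle is handling the O'Neill integrability tensor $A$ in the horizontal Ricci formula. For a \emph{conformal} rather than Riemannian submersion, $A$ acquires correction terms coupled to $\nabla f$ and to the conformal factor itself, and one must verify that in the horizontal Ricci identity these corrections collapse into a scalar multiple of $\pi^* h$ rather than producing a genuinely anisotropic contribution. This clean cancellation is what allows the Einstein condition on $(B, h)$ to be extracted; it relies on the K\"ahler compatibility between the complex structure $J$ and the horizontal/vertical splitting (so that $\pi$ is holomorphic and $A$ has the expected symmetry with respect to $J$) together with the vertical-gradient hypothesis on $f_t$. Once this cancellation is in place, both directions of the equivalence reduce to direct identification of terms.
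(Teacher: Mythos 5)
Your route is genuinely different from the paper's: you argue invariantly through Besse--O'Neill type curvature identities for conformal submersions, whereas the paper fixes local holomorphic submersion coordinates $(z^1,\dots,z^n,\xi)$, writes $\det g=f^n g_{\xi\bar\xi}\det h$, and extracts both the heat equation (from $\partial_t\ln\det g=\Delta\ln\det g$) and the Einstein condition (from $R_{i\bar j}=-\partial_i\partial_{\bar j}\ln\det g$) by direct computation. The invariant route is viable, and the cancellation you single out does occur: combining Lemma \ref{A_1} with Lemma \ref{vertical of Lie} gives $A_XY=\tfrac12\bigl(\omega(X,Y)J\nabla\ln f-g(X,Y)\nabla\ln f\bigr)$, and summing $g(A_Xe_i,A_Ye_i)$ over a horizontal orthonormal frame yields $\tfrac12\|\nabla\ln f\|^2\,g(X,Y)$, an isotropic term, precisely because $J$ preserves the horizontal distribution. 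But as written there is a genuine gap: the entire theorem hinges on the claim that \emph{every} correction in the horizontal Ricci identity is a multiple of $\pi^*h$, and you explicitly defer that verification. Until it is carried out --- including the contributions of the vertical Hessian of $f$ and of the divergence of $A$, not only the $\|A\|^2$-type terms --- neither implication is established. Two further assertions are also unproved: that the invariant mixed component $\mathrm{Ric}(\mathcal{H}X,V)$ vanishes, and that totally geodesic fibers persist. In the paper both follow from Lemma \ref{ratio}: preservation of the conformal condition forces the ratios $s_i=g_{i\bar\xi}/g_{\xi\bar\xi}$ to be time-independent, so the horizontal distribution does not rotate, and the totally geodesic condition $\partial_{\bar\xi}s_i=0$ is then automatically preserved. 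Your proposal never controls the mixed components, which is exactly where this information lives.

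The sufficiency half is also under-specified. Your auxiliary family $\tilde g_t$ is prescribed on $\mathcal{H}\times\mathcal{H}$ and on the fibers, with the two distributions ``declared orthogonal,'' but which distribution is horizontal is determined by the mixed components $\tilde g_{i\bar\xi}$, which you never fix; moreover the fiber evolution is defined circularly through $\mathrm{Ric}(\tilde g_t)$, and you do not check that the resulting family stays K\"ahler (closed) before invoking uniqueness of the flow. The paper settles all three points at once with the explicit ansatz \eqref{matrix g}, in which the $s_i$ are frozen at $t=0$, the only evolving scalar is $g_{\xi\bar\xi}$ (satisfying a genuine parabolic equation coupled to $f_t$), and the relation \eqref{kahler property} is what keeps the ansatz K\"ahler. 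To complete your argument you should either carry out the full invariant computation of the horizontal, mixed, and vertical Ricci for a horizontally homothetic conformal submersion with totally geodesic fibers, or adopt the coordinate ansatz for at least the sufficiency direction.
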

For the singularity analysis, we reconstruct the following $\textit{splitting lemma}$ of Fong in \cite{1}.  
\begin{Th}\label{main theorem}
Under the above scenario, we choose a sequence $(x_i, t_i) \in M\times [0,T)$ such that $t_i \rightarrow T$ and:
$K_i=||Rm(x_i)||_{g(t_i)} \rightarrow \infty $ as $i \rightarrow \infty$ and define the corresponding sequence of dilated metrics: 
\[g_i(t):=K_ig(t_i+K_i^{-1}t), t \in [-\beta_i, \alpha_i] \]
with $\beta_i\rightarrow \infty$ and $\alpha_i\rightarrow A\geq 0$. 
Suppose that the curvature tensors of $g_i(t)$ are uniformly bounded for all $i$, i.e there exists constant $C>0$ such that:
\[\sup_{M\times [-\beta_i,\alpha_i]}||Rm||_{g_i(t)}\leq C \]
Then, after passing through a sub-sequence, $(M, g_i(t), x_i)$ converge smoothly in the pointed Cheeger-Gromov sense to a complete ancient K$\ddot{\text{a}}$hler-Ricci solution \[(M_{\infty},g_{\infty}(t),x_{\infty})\] whose universal cover is of the form:
$(\mathbb{C}^n\times N_2, ||dz||^2\oplus h_2(t))$, with $n$ being the complex dimension of $B$. 
\end{Th}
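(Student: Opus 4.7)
The plan is to adapt the splitting strategy of Fong in \cite{1} to the present setting, carried out in four steps.

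First, I extract the Cheeger-Gromov limit via Hamilton's compactness theorem: the uniform curvature bound on $g_i(t)$, combined with a uniform lower injectivity radius bound from Perelman's $\kappa$-non-collapsing theorem applied to the underlying K\"ahler-Ricci flow on the compact manifold $M$ with finite-time singularity, produces after passing to a subsequence a smooth pointed limit $(M_\infty, g_\infty(t), x_\infty)$, which is a complete ancient K\"ahler-Ricci solution. The K\"ahler condition passes to the limit since the parallel complex structure $J$ is preserved along the flow and under smooth convergence.

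Second, I transport the submersion structure to the limit. By Theorem \ref{preservation of submersion}, for each $t \in [0, T)$ the horizontal distribution $\mathcal{H} \subset TM$ is smooth, $J$-invariant, and of complex rank $n$. Constant rescaling preserves orthogonal decompositions, so $\mathcal{H}$ is also the horizontal distribution for every $g_i(t)$. Under Cheeger-Gromov convergence, it induces a $J$-invariant smooth subbundle $\mathcal{H}_\infty \subset TM_\infty$ of complex rank $n$, together with orthogonal complement $\mathcal{V}_\infty$ of complex rank $1$ lifting the fiber direction.

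Third, I show $\mathcal{H}_\infty$ is flat and parallel. By O'Neill-type formulas for horizontally homothetic conformal submersions with totally geodesic fibers, all sectional curvatures involving the horizontal distribution, namely $\sec(X, Y)$ for horizontal $X, Y$ and the mixed $\sec(X, V)$ for $X$ horizontal and $V$ vertical, can be expressed via the base sectional curvature of $(B, h)$ weighted by $1/f_t$ and the normalized integrability tensor $|A_X Y|^2 / (|X|^2 |Y|^2)$. Each of these quantities scales as $K_i^{-1}$ under $g \to K_i g$, so provided that $f_t$ and $|A|^2$ remain uniformly bounded in the original metric near the blow-up points $(x_i, t_i)$, all such curvatures vanish in the limit. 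Combined with the scale-invariant total-geodesicness of the fibers preserved by Theorem \ref{preservation of submersion}, the integrability tensor and second fundamental form of $\mathcal{H}_\infty$ both vanish, so that $\mathcal{H}_\infty$ and $\mathcal{V}_\infty$ are parallel in $(M_\infty, g_\infty(t))$.

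Finally, de Rham's splitting theorem applied to the universal cover $\widetilde{M}_\infty$, together with the $J$-invariance of the two parallel distributions, yields a K\"ahler product decomposition $\widetilde{M}_\infty = N_1 \times N_2$. The factor $N_1$ is a complete, simply connected, flat K\"ahler manifold of complex dimension $n$, hence isometric to $(\mathbb{C}^n, \|dz\|^2)$, while $N_2$ is a Riemann surface equipped with the induced ancient K\"ahler-Ricci flow $h_2(t)$ inherited from the fiber direction. The principal obstacle will be the uniform control of the dilation $f_t$ and the $A$-tensor in the original metric near the blow-up points, so that they decay cleanly under rescaling: this requires careful use of parabolic regularity for the heat equation in Theorem \ref{preservation of submersion}, together with the geometric identities coming from the preserved horizontally homothetic conformal submersion structure.
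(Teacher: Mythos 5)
Your proposal is correct and follows essentially the same route as the paper: Hamilton compactness plus Perelman non-collapsing for the limit, transport of the horizontal/vertical decomposition, vanishing of the O'Neill tensors under the $K_i^{-1}$ scaling of $\|A\|^2$, a parallel-distribution splitting of the universal cover, and flatness of the horizontal factor via the O'Neill sectional curvature formula. The uniform control of $f_t$ and $\|\nabla f_t\|$ that you flag as the principal obstacle is exactly what the paper supplies beforehand, via the parabolic Schwarz lemma (using the limiting class condition \eqref{initial}) and a maximum-principle gradient estimate for the heat equation satisfied by $f$, which combine with the identity $\|A\|_{g(t)}^2\leq C\|\nabla f_t\|_{g(t)}^2/f_t^2$ to give the required decay.
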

From this theorem the formation of type I singularity follows. More precisely:
\begin{Th}\label{singularity}
If $\pi: (M^{n+1}, g_0)\to (B^n,h)$ satisfies the above conditions, then the flow encounters type I singularity. Furthermore, under the standard high curvature point picking $(x_i,t_i)$ with $K_i:=||Rm(x_i,t_i)||_{g(t_i)}=\max||Rm||_{g(t_i)}$ and $t_i\to T$ and:
\begin{equation}
    g_i(t)=K_ig(t_i+K_i^{-1}t), \quad t\in [-t_iK_i, (T-t_i)K_i)
\end{equation}
the pointed sequence $(M,g_i(t),x_i)$ converges smoothly after passing through a subsequence in the Cheeger-Gromov sense to an ancient $\kappa$-solution $(M_\infty, g_\infty(t),x_\infty)$, whose universal cover splits isometrically as:
\begin{equation}
    (\mathbb{C}^n\times \mathbb{CP}^1, ||dz||^2\oplus \omega_{FS}(t))
\end{equation}
\end{Th}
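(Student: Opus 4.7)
The plan is to first establish that the singularity must be of type I, and then apply Theorem \ref{main theorem} together with the classification of two-dimensional $\kappa$-noncollapsed ancient Ricci solutions to identify the blow-up limit precisely.

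\emph{Step 1: Type I singularity.} I would argue by contradiction. Suppose the singularity is type II, i.e.\ $\sup_{M\times[0,T)}(T-t)||Rm||_{g(t)}=\infty$. Hamilton's point-picking (as used by Fong in \cite{1} for the Calabi-symmetric case) then produces a sequence $(x_i,t_i)$ with $K_i=||Rm(x_i,t_i)||_{g(t_i)}\to\infty$ such that the parabolically rescaled flows $g_i(t)=K_ig(t_i+K_i^{-1}t)$ have uniformly bounded curvature on time intervals $[-\beta_i,\alpha_i]$ with both $\beta_i,\alpha_i\to\infty$. Theorem \ref{main theorem} then applies and, after passing to a subsequence, produces a complete \emph{eternal} K\"ahler--Ricci solution $(M_\infty,g_\infty(t),x_\infty)$ whose universal cover splits isometrically as $\mathbb{C}^n\times N_2$ with $N_2$ a Riemann surface. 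The factor $(N_2,h_2(t))$ is then a complete eternal Ricci flow in real dimension two, of bounded curvature, whose positive maximum is attained at the spacetime point corresponding to $(x_\infty,0)$. Hamilton's rigidity for such eternal two-dimensional flows forces $(N_2,h_2(t))$ to be the cigar soliton. But the finite-time K\"ahler--Ricci flow on the closed manifold $M$ is $\kappa$-noncollapsed on all scales by Perelman, and this property is inherited by every Cheeger--Gromov limit, whereas the cigar (and any Riemannian product containing it) is $\kappa$-collapsed at infinity---a contradiction. Hence the singularity is type I and there exists $C_0>0$ with $||Rm||_{g(t)}\le C_0/(T-t)$.

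\emph{Step 2: Rescaling and the splitting.} With the type I bound in hand, the point-picking specified in the statement satisfies $K_i(T-t_i)\ge 1$, and the rescaled flows $g_i(t)=K_ig(t_i+K_i^{-1}t)$ obey
\[
    \sup_{M\times[-t_iK_i,\,(T-t_i)K_i)}||Rm||_{g_i(t)}\le C_0.
\]
With $\beta_i=t_iK_i\to\infty$ and $\alpha_i=(T-t_i)K_i\to A\ge 0$, the hypotheses of Theorem \ref{main theorem} are verified, so after passing to a subsequence we obtain a Cheeger--Gromov limit $(M_\infty,g_\infty(t),x_\infty)$ whose universal cover has the form $(\mathbb{C}^n\times N_2,\,||dz||^2\oplus h_2(t))$. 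Perelman's $\kappa$-noncollapsing passes to the limit, rendering $(M_\infty,g_\infty(t))$ an ancient $\kappa$-solution, and the $\mathbb{C}^n$-splitting localizes all remaining curvature on the two-dimensional factor, so $(N_2,h_2(t))$ itself is a complete $\kappa$-noncollapsed ancient Ricci flow on a real surface with bounded, nontrivial curvature attained at $t=0$.

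\emph{Step 3: Identifying $N_2$.} The classification of $\kappa$-noncollapsed ancient solutions to the Ricci flow on a real surface, together with the K\"ahler structure inherited by the limit, leaves the shrinking round sphere as the only option compatible with positive curvature attained at a spacetime point: the cigar is ruled out by $\kappa$-noncollapsing, while flat or nonpositively curved models cannot produce the observed curvature blow-up. Consequently $N_2$ is $\mathbb{CP}^1$ equipped with a family of Fubini--Study metrics $\omega_{FS}(t)$, giving the claimed splitting
\[
    (\mathbb{C}^n\times\mathbb{CP}^1,\,||dz||^2\oplus\omega_{FS}(t)).
\]

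\emph{Main obstacle.} The hardest step is Step 1. One has to organize Hamilton's point-picking carefully enough that the type II assumption genuinely yields an eternal limit to which Theorem \ref{main theorem} can be applied, and then combine that theorem's $\mathbb{C}^n$-splitting with the cigar rigidity in real dimension two and Perelman's $\kappa$-noncollapsing to reach a contradiction. A smaller but still delicate point in Step 3 is excluding the remaining two-dimensional $\kappa$-solutions and pinning down the Fubini--Study metric; this is precisely where the combination of $\kappa$-noncollapsing and the underlying K\"ahler structure inherited from the submersion hypothesis does the work of isolating $\mathbb{CP}^1$ with $\omega_{FS}(t)$.
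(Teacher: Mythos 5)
Your proposal is correct and follows essentially the same route as the paper: exclusion of type II via Hamilton's point-picking, Theorem \ref{main theorem}, Hamilton's eternal-solution rigidity (the cigar) and Perelman's $\kappa$-noncollapsing, followed by the type I rescaling and the identification of $N_2$ with the shrinking round sphere. The only place where the paper does more explicit work is in comparing the dominant terms of $||Rm||_{g(t_i)}$ and $R_{g(t_i)}$ (equations (\ref{Rm}) and (\ref{scalar})) so that the normalization of $||Rm||$ transfers to the scalar-curvature bound $R_{N_2}(h_2(t))\leq 1$ with equality at $(x_\infty,0)$; you reach the same end by invoking the $\mathbb{C}^n$-splitting to localize all the limit curvature on the surface factor, where $||Rm||$ and $|R|$ are pointwise comparable.
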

The organization of this paper is as follows: in section 2, we provide some backgrounds on conformal submersion. In section 3, we will prove that as long as the conformal submersion condition is preserved, the totally geodesic fibers property of $\pi$ will be preserved and derive a heat-type evolution equation for $f_t$. From this, we can further produce the necessary and sufficient conditions on initial metric such that the map $\pi$ remains being horizontally homothetic conformal submersion with totally geodesic fibers along the flow. Finally, we prove $\textbf{Theorem \ref{main theorem}}$ in a slightly more general setting, without assumption on the dimension of the manifolds. This introduce an alternative approach for $\textbf{Lemma 6.4}$ in \cite{1}. In the last section, the singularity model in $\textbf{Theorem \ref{singularity}}$ will be achieved. 

\textit{Acknowledgements.} The author would like to express sincere gratitude to Prof. Frederick Tsz-Ho Fong for his valuable advice and support during the project. 

\section{Conformal submersion}
In this section, we will present a brief introduction of conformal submersion between Riemannian manifolds (interested readers can refer to \cite{4}, \cite{7} and \cite{15} for more detailed information). Given a submersion $\pi: M\rightarrow B$ of Riemannian manifolds and $\pi(q)=p$. Let $\mathcal{V}_qM$ be the kernel of the tangent map $\pi_*$ at $q$. The orthogonal complement of $\mathcal{V}_qM$ with respect to $g_M$ is denoted by $\mathcal{H}_qM$. The map $\pi$ is called conformal submersion if there exists a function $f: M\rightarrow \mathbb{R}_{>0}$, such that for $u,v \in \mathcal{H}_qM$:
\[g_M(u,v)=f(q)g_B(\pi_*u,\pi_*v).\]

We denote by $\mathcal{H}_q$, $\mathcal{V}_q$ the horizontal and vertical distributions, respectively. 

Let $\mathcal{H}X$ (and $\mathcal{V}X$, respectively) be the projection of a vector field $X$ to the horizontal (vertical) distribution. We say that $\pi$ is \textit{horizontally homothetic conformal submersion} if :
\[\mathcal{H}(\nabla f)=0.\]

The O'Neill tensors corresponding to the conformal submersion are defined as follows:
\begin{equation}
    T_EF=\mathcal{V}\nabla_{\mathcal{V}E}\mathcal{H}F+\mathcal{H}\nabla_{\mathcal{V}E}\mathcal{V}F,
\end{equation}
\begin{equation}
    A_EF=\mathcal{V}\nabla_{\mathcal{H}E}\mathcal{H}F+\mathcal{H}\nabla_{\mathcal{H}E}\mathcal{V}F.
\end{equation}

It is obvious that the restriction of tensor $T$ to each fiber $\pi^{-1}(p)$ is the second fundamental form of that fiber. Therefore, the condition that the bundle map $\pi$ has totally geodesic fibers is equivalent to the vanishing of $T$ tensor. The following lemma in \cite{7} will be useful later:
\begin{Lemma}\label{A_1}
Suppose that $X, Y\in \mathcal{H}M$ are horizontal vector fields, then we have:
\begin{equation}
A_XY=\dfrac{1}{2}\left(\mathcal{V}[X,Y]-g(X,Y)\mathcal{V}\nabla\ln f\right).
\end{equation}
\end{Lemma}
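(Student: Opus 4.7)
The plan is to apply the Koszul formula to $g(\nabla_X Y, V)$ for arbitrary vertical $V$, then simplify using the conformal relation. Since the O'Neill tensor $A$ is $C^\infty(M)$-linear in both slots, I would first reduce to the case where $X$ and $Y$ are basic horizontal vector fields, i.e.\ horizontal lifts of vector fields $X^B,Y^B$ on $B$. For such fields the second term in the definition of $A_X Y$ vanishes (as $\mathcal{V}Y=0$), so it suffices to identify $\mathcal{V}\nabla_X Y$, and this is done by pairing against arbitrary vertical $V$.

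The central computation is the Koszul formula
\[
2g(\nabla_X Y, V) = Xg(Y,V) + Yg(V,X) - Vg(X,Y) + g([X,Y],V) + g([V,X],Y) + g([V,Y],X).
\]
The first two terms vanish because $g$ pairs horizontal with vertical fields trivially. For the last two terms, I would invoke the standard fact that if $V$ is vertical and $X$ is basic horizontal then $[V,X]$ is vertical (since $\pi_*[V,X]=[\pi_*V, X^B]=0$); hence $g([V,X],Y) = g([V,Y],X) = 0$ because $X, Y$ are horizontal. What survives is $-Vg(X,Y)+g([X,Y],V)$.

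To handle $Vg(X,Y)$, I would exploit the conformal relation $g(X,Y) = f\cdot(g_B(X^B,Y^B)\circ\pi)$. Since $g_B(X^B,Y^B)\circ\pi$ is fiber-constant, differentiating along the vertical direction $V$ gives
\[
Vg(X,Y) = V(f)\,(g_B(X^B,Y^B)\circ\pi) = V(\ln f)\, g(X,Y) = g(X,Y)\, g(\mathcal{V}\nabla\ln f, V).
\]
Combining these simplifications yields $2g(\mathcal{V}\nabla_X Y, V) = g([X,Y], V) - g(X,Y)\, g(\mathcal{V}\nabla\ln f, V)$ for every vertical $V$, which, since both sides are vertical, gives precisely the asserted formula.

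The argument is essentially a single Koszul calculation, and the main subtleties are structural rather than computational: the $C^\infty$-linearity argument reducing to basic horizontal fields, and the verticality of $[V,X]$ for such fields. It is worth noting that the horizontally homothetic hypothesis is \emph{not} used here; the formula holds for any conformal submersion, which is why the projection $\mathcal{V}\nabla\ln f$ appears rather than the full gradient.
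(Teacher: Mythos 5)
Your proof is correct. Note that the paper itself gives no argument for this lemma: it is quoted directly from reference [7] as the conformal-submersion analogue of O'Neill's classical identity $A_XY=\tfrac12\mathcal{V}[X,Y]$, so there is no in-paper proof to compare against. Your Koszul-formula computation is the standard route and every step checks out: since $\mathcal{V}Y=0$ for horizontal $Y$, the tensor reduces to $A_XY=\mathcal{V}\nabla_XY$; the $C^\infty$-linearity of both sides (using that $\mathcal{V}[X,\phi Y]=\phi\,\mathcal{V}[X,Y]$ when $Y$ is horizontal) legitimately reduces the claim to basic fields; $[V,X]$ is vertical for basic $X$ because it is $\pi$-related to $[0,\pi_*X]=0$; and the two surviving Koszul terms $-Vg(X,Y)+g([X,Y],V)$ are handled exactly as you say, with $Vg(X,Y)=g(X,Y)\,g(\mathcal{V}\nabla\ln f,V)$ following from the fiber-constancy of $g_B(X^B,Y^B)\circ\pi$. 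Your closing remark is also accurate and worth keeping: the identity holds for arbitrary conformal submersions, the horizontally homothetic condition only serving (elsewhere in the paper) to replace $\mathcal{V}\nabla\ln f$ by the full gradient $\nabla\ln f$.
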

The following result derived in \cite{15} gives a nice expression for Lie bracket of horizontal vector fields in the case of horizontally homothetic conformal submersion between K$\ddot{\text{a}}$hler manifolds. 
\begin{Lemma}\label{vertical of Lie}
Suppose that $\pi: (M,g)\rightarrow (B,h)$ is a horizontally homothetic conformal submersion of K$\ddot{\text{a}}$hler manifolds with dilation $f$ and totally geodesic fibers. Then for horizontal vector fields $X,Y$ on $TM$ we have:
\begin{equation}
    \mathcal{V}[X,Y]=\omega(X,Y)J(\nabla \ln f),
\end{equation}
where $\omega$ is the corresponding K$\ddot{\text{a}}$hler form on $M$. 
\end{Lemma}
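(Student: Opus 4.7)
The plan is to exploit the closedness $d\omega=0$ of the total K\"ahler form together with the decomposition of $\omega$ induced by the submersion. As a preliminary observation, in the K\"ahler setting with a holomorphic $\pi$ (implicit in the Calabi-symmetric models motivating the theorem) both $\mathcal{H}$ and $\mathcal{V}$ are $J$-invariant, and the horizontally homothetic hypothesis forces $\nabla \ln f$ to be vertical; this makes both sides of the claimed identity genuinely vertical. The conformal condition then gives $\omega(X,Y) = f\,\pi^*\omega_B(X,Y)$ on horizontal inputs, while $\omega(X,V)=0$ for $X$ horizontal and $V$ vertical, so
\[
\omega = f\,\pi^*\omega_B + \omega_{\mathcal{V}},
\]
where $\omega_{\mathcal{V}}$ vanishes on any horizontal argument.

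The heart of the argument is to evaluate $d\omega=0$ on a triple $(X,Y,V)$ with $X,Y$ basic horizontal lifts and $V$ vertical. Applying the invariant formula for $d$ of a 2-form, only two terms survive. From the $f\,\pi^*\omega_B$ piece the function-derivative contribution is $V(f)\,h(\pi_*X,\pi_*Y) = V(\ln f)\,\omega(X,Y)$; all bracket terms vanish because $[X,V]$ is vertical for basic $X$, and $\pi^*\omega_B$ kills any vertical input. From the $\omega_{\mathcal{V}}$ piece the only survivor is the bracket term $-\omega(\mathcal{V}[X,Y],V)$, since $\omega_{\mathcal{V}}$ is zero whenever it meets a horizontal argument. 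Equating these gives
\[
\omega(\mathcal{V}[X,Y],V) = g(\nabla \ln f, V)\,\omega(X,Y) \quad \text{for every vertical } V,
\]
after which a routine manipulation using the compatibility between $g$ and $J$ and the $J$-invariance of $\mathcal{V}$ converts this pointwise identity into the stated formula $\mathcal{V}[X,Y] = \omega(X,Y)\,J\nabla\ln f$.

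The main obstacle I anticipate is the bookkeeping in the exterior-derivative computation \textemdash{} in particular, verifying that $[X,V]$ truly stays vertical for basic horizontal $X$ and vertical $V$, so that the $\pi^*\omega_B$ cross-terms really drop out \textemdash{} together with an honest justification that $\mathcal{H}$ is $J$-invariant as a consequence of the setup rather than an extra hypothesis. A tempting alternative based on Lemma \ref{A_1} alone, combined with the parallelism relation $A_X(JY) = JA_XY$, only yields one linear equation on $\mathcal{V}[X,Y]$ and does not close up without some extra integrability input, which is why I prefer the exterior-derivative route presented above.
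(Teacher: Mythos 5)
Your proposal is correct and takes essentially the same route as the paper: both evaluate $d\omega=0$ on two basic horizontal fields and one vertical field, use the conformal relation $\omega(X,Y)=f\,\pi^*\omega_B(\pi_*X,\pi_*Y)$ so that the surviving derivative term becomes $V(\ln f)\,\omega(X,Y)$, and then identify $\mathcal{V}[X,Y]$ through the compatibility of $g$ with $J$ on the ($J$-invariant) vertical distribution. The only cosmetic differences are that the paper inserts $JV_i$ rather than $V$ into the third slot of $d\omega$ and does not write the decomposition $\omega=f\,\pi^*\omega_B+\omega_{\mathcal{V}}$ explicitly.
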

\begin{proof}
Suppose that $\{V_1,\ldots,V_k\}$ is an orthonormal basis of the vertical distribution $\mathcal{V}M$. Then:
\begin{equation}\label{vertical1}
    \mathcal{V}[X,Y]=\sum_{i=1}^{k}g([X,Y],V_i)V_i.
\end{equation}
We have $g([X,Y],V_i)=-\omega([X,Y],JV_i)$. Using the fact that $\omega$ is K$\ddot{\text{a}}$hler, we have: $d\omega(X,Y,JV_i)=0$, which is equivalent to: 
\[X\omega(Y,JV_i)-Y\omega(X,JV_i)+JV_i\omega(X,Y)-\]
\[\omega([X,Y],JV_i)+\omega([X,JV_i],Y)-\omega([Y,JV_i],X)=0.\]
Since $\omega(X,JV_i)=\omega(Y,JV_i)=0$ and $[X,JV_i], [Y,JV_i]$ are vertical (to see this we may assume $X$ is basis, $\pi_*[X,JV_i]=[\pi_*X,\pi_*JV_i]=0)$, the identity reduces to:
\begin{equation}\label{closed omega}
    JV_i\omega(X,Y)=\omega([X,Y],JV_i).
\end{equation}
Denoted by $\omega_h$ the K$\ddot{\text{a}}$hler form of $B$, then by the condition of conformal submersion we have:
\begin{equation}
    JV_i\omega(X,Y)=JV_i(f\omega_h(\pi_*X,\pi_*Y))=\omega_h(\pi_*X,\pi_*Y)JV_if=\omega(X,Y)g(\nabla\ln f,JV_i).
\end{equation}
Since $g(\nabla\ln f,JV_i)=-g(J\nabla \ln f,V_i)$ we have:
\begin{equation}
    g([X,Y],V_i)=\omega(X,Y) g(J\nabla \ln f,V_i).
\end{equation}
This together with $(\ref{vertical1})$ leads to:
\begin{equation}
    \mathcal{V}[X,Y]=\sum_{i=1}^{k}\omega(X,Y)g(J\nabla\ln f,V_i)V_i=\omega(X,Y)J\nabla\ln f.
\end{equation}
We complete the proof. 
\end{proof}

\section{Preservation of conformal submersion}
In this section, we will prove our $\textbf{Theorem \ref{preservation of submersion}}$. 
\subsection{Necessary conditions} \quad 

Suppose that under the flow, the map $\pi: (M^{n+1},g(t)) \to (B,h)$ is horizontally homothetic conformal submersion of dilation $f_t$ with totally geodesic fibers at all time. To obtain some local properties along the flow, we fix a point $p\in M$ and a local submersion coordinate $(z_1,z_2,\ldots,z_n,\xi)$ in a neighborhood of $p$. 

We first obtain a necessary property on the form of the metrics along the flow:
\begin{Lemma}\label{ratio}
If the conformal submersion condition is preserved under the K$\ddot{\text{a}}$hler-Ricci flow, then for all $i=1,2,\ldots,n$ we must have:
\[
    \dfrac{g_{i\bar{\xi}}}{R_{i\bar{\xi}}}=\dfrac{g_{\xi\bar{\xi}}}{R_{\xi\bar{\xi}}},
\]
at all time $t$. In particular, that must happen at $t=0$.
\end{Lemma}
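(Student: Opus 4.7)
The plan is to exhibit the claim as a pointwise consequence of the horizontally homothetic conformal submersion structure with totally geodesic fibers at the given time $t$; the preservation hypothesis then immediately propagates the ratio equality to every $t$, including $t=0$. Setting $\alpha_i := g_{i\bar{\xi}}/g_{\xi\bar{\xi}}$ in the local submersion coordinates $(z_1,\ldots,z_n,\xi)$, the horizontal lift of $\partial_{z_i}$ is $e_i = \partial_{z_i} - \alpha_i \partial_\xi$ and the conformal submersion condition $g(e_i,\bar{e}_j) = f h_{i\bar{j}}$ is equivalent to the block form
\[
    g_{i\bar{j}} = f\,h_{i\bar{j}} + \alpha_i \overline{\alpha_j}\,g_{\xi\bar{\xi}},\qquad g_{i\bar{\xi}} = \alpha_i g_{\xi\bar{\xi}}.
\]
Since $g_{i\bar{\xi}} = \alpha_i g_{\xi\bar{\xi}}$, the desired ratio equality is exactly $R_{i\bar{\xi}} = \alpha_i R_{\xi\bar{\xi}}$, that is, $R(e_i,\bar{\partial}_\xi)=0$ in the adapted frame.

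A Schur-complement calculation on the block form gives $\det g = f^n g_{\xi\bar{\xi}}\det h$; because $\det h$ depends only on the base coordinates, the K\"ahler Ricci formula $R_{a\bar{b}} = -\partial_a\partial_{\bar{b}}\log\det g$ yields
\[
    R_{i\bar{\xi}} = -n\,\partial_i\partial_{\bar{\xi}}\log f - \partial_i\partial_{\bar{\xi}}\log g_{\xi\bar{\xi}},\qquad R_{\xi\bar{\xi}} = -n\,\partial_\xi\partial_{\bar{\xi}}\log f - \partial_\xi\partial_{\bar{\xi}}\log g_{\xi\bar{\xi}},
\]
so the ratio equality reduces to the two identities $\partial_i\partial_{\bar{\xi}}\log f = \alpha_i\partial_\xi\partial_{\bar{\xi}}\log f$ and $\partial_i\partial_{\bar{\xi}}\log g_{\xi\bar{\xi}} = \alpha_i\partial_\xi\partial_{\bar{\xi}}\log g_{\xi\bar{\xi}}$.

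Two ingredients extracted from the hypotheses finish the argument. The horizontally homothetic condition $\mathcal{H}(\nabla f)=0$ reads $e_i(f)=0$, i.e.\ $\partial_i\log f = \alpha_i\partial_\xi\log f$. The totally geodesic fibers condition, via a direct computation of $\Gamma^i_{\xi\xi} = g^{i\bar{d}}\partial_\xi g_{\xi\bar{d}}$ using the block inverse of our metric, is equivalent to $\partial_{\bar{\xi}}\alpha_i = 0$ for every $i$, which in particular makes $[e_i,\partial_{\bar{\xi}}]$ vanish on smooth functions. Applying $\partial_{\bar{\xi}}$ to $\partial_i\log f = \alpha_i\partial_\xi\log f$ and using $\partial_{\bar{\xi}}\alpha_i=0$ produces the first identity. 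For the second, combine the K\"ahler closedness relation $\partial_\xi g_{i\bar{\xi}} = \partial_i g_{\xi\bar{\xi}}$ with $g_{i\bar{\xi}} = \alpha_i g_{\xi\bar{\xi}}$ to deduce $\partial_\xi\alpha_i = e_i(\log g_{\xi\bar{\xi}})$. Applying $\partial_{\bar{\xi}}$ and using the commutation $[e_i,\partial_{\bar{\xi}}]=0$ together with the commutativity of the holomorphic partials yields
\[
    e_i(\partial_{\bar{\xi}}\log g_{\xi\bar{\xi}}) = \partial_{\bar{\xi}}(\partial_\xi\alpha_i) = \partial_\xi(\partial_{\bar{\xi}}\alpha_i) = 0,
\]
which is the second identity.

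The principal obstacle is noticing that the totally geodesic fibers assumption is precisely what makes $e_i$ commute with $\partial_{\bar{\xi}}$, and that combining this with the K\"ahler identity $\partial_\xi g_{i\bar{\xi}} = \partial_i g_{\xi\bar{\xi}}$ rewritten as $\partial_\xi\alpha_i = e_i(\log g_{\xi\bar{\xi}})$ is what transfers the horizontal-lift structure cleanly onto the logarithmic derivatives of $g_{\xi\bar{\xi}}$; the remaining steps are routine manipulations from the block metric and the K\"ahler condition.
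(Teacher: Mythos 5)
Your proof is correct, but it takes a genuinely different route from the paper's. The paper argues dynamically: it differentiates the conformal condition $g_t(\mathcal{H}X,\mathcal{H}Y)=f_t\,h(\pi_*X,\pi_*Y)$ in time, uses $\partial_t g=-\mathrm{Ric}$ to write $R_{i\bar j}=-\partial_t f\,h_{i\bar j}+R_{i\bar\xi}R_{\xi\bar j}/R_{\xi\bar\xi}$ alongside $g_{i\bar j}=fh_{i\bar j}+g_{i\bar\xi}g_{\xi\bar j}/g_{\xi\bar\xi}$, and extracts the ratio identity from $\partial_t\bigl(g_{i\bar\xi}g_{\xi\bar i}/g_{\xi\bar\xi}\bigr)=-R_{i\bar\xi}R_{\xi\bar i}/R_{\xi\bar\xi}$, which factors as $|g_{i\bar\xi}R_{\xi\bar\xi}-R_{i\bar\xi}g_{\xi\bar\xi}|^2=0$. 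You instead prove the identity \emph{statically at each fixed time}: writing $R_{a\bar b}=-\partial_a\partial_{\bar b}\log\det g$ with $\det g=f^n g_{\xi\bar\xi}\det h$, you reduce the claim to $e_i(\partial_{\bar\xi}\log f)=0$ and $e_i(\partial_{\bar\xi}\log g_{\xi\bar\xi})=0$ for the horizontal lift $e_i=\partial_i-\alpha_i\partial_\xi$, and these follow from the horizontally homothetic condition, the totally geodesic fibers condition $\partial_{\bar\xi}\alpha_i=0$, and the K\"ahler identity $\partial_\xi g_{i\bar\xi}=\partial_i g_{\xi\bar\xi}$; all of these are available at each time under the section's standing hypothesis (horizontally homothetic, totally geodesic fibers preserved). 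Your computations check out, and in fact they are essentially the ones the paper later performs in the sufficiency direction to obtain the Ricci matrix \eqref{ricci curvature 1}. What your route buys: the statement becomes a pointwise-in-time fact about any K\"ahler horizontally homothetic conformal submersion with totally geodesic fibers, making ``in particular at $t=0$'' immediate rather than a consequence of preservation; it also avoids the paper's intermediate step \eqref{R_ij}, whose rank-one form for the mixed Ricci block is not obviously independent of the conclusion being proved. What it costs: you lean on the totally geodesic fibers hypothesis explicitly (the paper's stated proof of the lemma does not invoke it), so you should flag that this is part of the standing assumptions rather than derived later.
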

\begin{proof}
For any $X,Y$ we have:
\begin{equation}
    g(t)(\mathcal{H}X,\mathcal{H}Y)=f_t(p)h(\pi_*X,\pi_*Y).
\end{equation}\label{conformal}
Differentiating this with respect to $t$ and notice that $\dfrac{\partial}{\partial t}\mathcal{H}X=-\dfrac{\partial}{\partial t}\mathcal{V}X\in \mathcal{V}_pM$ we get:
\[
    R(\mathcal{H}X,\mathcal{H}Y)=-\dfrac{\partial f}{\partial t}h(\pi_*X,\pi_*Y).
\]
From this it follows that:
\begin{equation}\label{R_ij}
    R_{i\bar{j}}=-\dfrac{\partial f}{\partial t}h_{i\bar{j}}+\dfrac{R_{i\bar{\xi}}R_{\xi\bar{j}}}{R_{\xi\bar{\xi}}}.
\end{equation}
Notice that from conformal condition $(\ref{conformal})$ we also have similar expression for $g$ as:
\[
    g_{i\bar{j}}=fh_{i\bar{j}}+\dfrac{g_{i\bar{\xi}}g_{\xi\bar{j}}}{g_{\xi\bar{\xi}}}.
\]
Since $\dfrac{\partial}{\partial t}g_{i\bar{j}}=-R_{i\bar{j}}$ we must then have:
\[
\dfrac{\partial}{\partial t}\left(\dfrac{g_{i\bar{\xi}}g_{\xi\bar{j}}}{g_{\xi\bar{\xi}}}\right)=-\dfrac{R_{i\bar{\xi}}R_{\xi\bar{j}}}{R_{\xi\bar{\xi}}}.
\]
From this, we obtain $\dfrac{g_{i\bar{\xi}}}{R_{i\bar{\xi}}}=\dfrac{g_{\xi\bar{\xi}}}{R_{\xi\bar{\xi}}}$ by taking $i=j$.
\end{proof}

Notice that since $\dfrac{\partial}{\partial t}\omega=-\text{Ric}(\omega)$, the above relation can be rewritten as follows:
\[
    \dfrac{\partial}{\partial t}(\ln g_{i\bar{\xi}})=\dfrac{\partial}{\partial t}(\ln g_{\xi\bar{\xi}}) \iff \dfrac{\partial}{\partial t}\left(\dfrac{ g_{i\bar{\xi}}}{g_{\xi\bar{\xi}}}\right)=0.
\]

From this lemma, we can find time-independent smooth functions: 
\[s_1,s_2\ldots,s_n: U\subset M\to \mathbb{C},\] 
such that $g_{i\bar{\xi}}=s_ig_{\xi\bar{\xi}}$ and $R_{i\bar{\xi}}=s_iR_{\xi\bar{\xi}}$ for all $t$. Therefore, the matrix representations of K$\ddot{\text{a}}$hler metric and Ricci tensor are rewritten as follows:
\begin{equation}\label{matrix g}
g_{A\bar{B}}=
\left[
\begin{array}{c|c}
fh_{i\bar{j}}+ \dfrac{g_{i\bar{\xi}}g_{\xi\bar{j}}}{g_{\xi\bar{\xi}}}& s_ig_{\xi\bar{\xi}} \\
\hline
\overline{s_j}g_{\xi\bar{\xi}} & g_{\xi\bar{\xi}}
\end{array}
\right],
\end{equation}

\begin{equation}\label{ricci curvature}
\text{Ric}_{A\bar{B}}=
\left[
\begin{array}{c|c}
-\dfrac{\partial f}{\partial t}h_{i\bar{j}}+ \dfrac{R_{i\bar{\xi}}R_{\xi\bar{j}}}{R_{\xi\bar{\xi}}}& s_iR_{\xi\bar{\xi}} \\
\hline
\overline{s_j}R_{\xi\bar{\xi}} & R_{\xi\bar{\xi}}
\end{array}
\right].
\end{equation}

It is well-known that $\pi$ has totally geodesic fibers if and only if $T$ tensor vanishes. The following provides equivalent condition in term of the metrics:
\begin{Prop}
The map $\pi: (M, g(t))\to (B,h)$ has totally geodesic fibers ($T$ tensor vanishes), meaning that:
\[
    \Gamma^{i}_{\xi\xi}\big|=0, \quad \forall i=1,2,\ldots,n.
\]
If and only if $\dfrac{\partial}{\partial \bar{\xi}}s_i=0$ for all $i=1,2,\ldots,n$. Consequently, if initially $\pi$ has totally geodesic fibers and the horizontally homothetic condition is preserved, then it continues to have totally geodesic fibers at all time. 
\end{Prop}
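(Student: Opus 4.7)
The plan is to reduce the vanishing of the $T$-tensor to a pointwise condition on the local functions $s_i$ by explicitly inverting the block form (\ref{matrix g}) and extracting $\Gamma^i_{\xi\xi}$. The key algebraic observation is that the top-left block $fh_{i\bar j}+s_i\overline{s_j}g_{\xi\bar\xi}$ is precisely $fh_{i\bar j}$ plus the rank-one Schur contribution $(s_ig_{\xi\bar\xi})g_{\xi\bar\xi}^{-1}(\overline{s_j}g_{\xi\bar\xi})$, so the Schur complement of the $(\xi\bar\xi)$-entry is exactly $fh_{i\bar j}$, whose inverse is $f^{-1}h^{i\bar j}$. Standard block inversion then yields
\[
g^{i\bar j}=f^{-1}h^{i\bar j},\qquad g^{i\bar\xi}=-f^{-1}h^{i\bar k}\overline{s_k},\qquad g^{\xi\bar\xi}=g_{\xi\bar\xi}^{-1}+f^{-1}h^{k\bar\ell}s_k\overline{s_\ell},
\]
which one can verify directly against the block form of $g_{A\bar B}$.

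Next I would substitute into the K\"ahler Christoffel formula $\Gamma^i_{\xi\xi}=g^{i\bar D}\partial_\xi g_{\xi\bar D}$. Using $g_{\xi\bar k}=\overline{s_k}g_{\xi\bar\xi}$ and the Leibniz rule,
\[
\partial_\xi g_{\xi\bar k}=(\partial_\xi\overline{s_k})g_{\xi\bar\xi}+\overline{s_k}\,\partial_\xi g_{\xi\bar\xi},
\]
the portion of $g^{i\bar k}\partial_\xi g_{\xi\bar k}$ proportional to $\partial_\xi g_{\xi\bar\xi}$ cancels exactly against $g^{i\bar\xi}\partial_\xi g_{\xi\bar\xi}$, thanks to the formula $g^{i\bar\xi}=-f^{-1}h^{i\bar k}\overline{s_k}$. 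What remains is the clean expression
\[
\Gamma^i_{\xi\xi}=f^{-1}\,g_{\xi\bar\xi}\,h^{i\bar k}\,\partial_\xi\overline{s_k}.
\]
Since $h^{i\bar k}$ is nondegenerate and $f, g_{\xi\bar\xi}>0$, this family vanishes for every $i$ if and only if $\partial_\xi\overline{s_k}=0$ for every $k$, i.e.\ $\partial_{\bar\xi}s_i=0$ for every $i$. En route I would briefly observe that $T\equiv 0$ reduces, in holomorphic submersion coordinates, to $\mathcal{H}\nabla_{\partial_\xi}\partial_\xi=0$ (mixed types vanish automatically by the K\"ahler condition), and this in turn is equivalent to $\Gamma^i_{\xi\xi}=0$ for all $i$ by the linear independence of the horizontal lifts $\partial_i-s_i\partial_\xi$ together with the vertical vector $\partial_\xi$.

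For the consequence, I would invoke Lemma \ref{ratio}: preservation of the horizontally homothetic conformal property forces $s_i=g_{i\bar\xi}/g_{\xi\bar\xi}$ to be independent of $t$. Hence the initial vanishing $\partial_{\bar\xi}s_i(\cdot,0)=0$, guaranteed by the totally geodesic hypothesis at $t=0$, persists for all $t$, so $\Gamma^i_{\xi\xi}(t)=0$ throughout the flow. The only delicate step is spotting the cancellation of the $\partial_\xi g_{\xi\bar\xi}$ terms in the Christoffel computation; once the block inverse is in hand, the rest is bookkeeping plus a direct appeal to Lemma \ref{ratio}.
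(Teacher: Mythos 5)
Your proposal is correct and follows essentially the same route as the paper: compute the block inverse of $g_{A\bar B}$, substitute into $\Gamma^i_{\xi\xi}=g^{i\bar D}\partial_\xi g_{\xi\bar D}$, observe the cancellation of the $\partial_\xi g_{\xi\bar\xi}$ terms to get $\Gamma^i_{\xi\xi}=f^{-1}g_{\xi\bar\xi}h^{i\bar k}\partial_\xi\overline{s_k}$, and conclude by nondegeneracy of $h$ and the time-independence of the $s_i$ from Lemma \ref{ratio}. Your Schur-complement justification of the block inverse and the remark reducing $T\equiv 0$ to $\Gamma^i_{\xi\xi}=0$ are small elaborations the paper leaves implicit, but the argument is the same.
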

\begin{proof}
The inverse matrix $[g]^{i\bar{j}}$ at any time $t$ can be calculated as:
\begin{equation}\label{matrix}
    g^{AB}=
    \begin{cases}
      \dfrac{1}{f}h^{i\bar{j}}, & \text{if}\ (A,B)=(i,\bar{j}) \\
      -\dfrac{1}{f}h^{i\bar{k}}\bar{s_k}, & \text{if}\ (A,B)=(i,\bar{\xi}) \\
      \dfrac{1}{g_{\xi\bar{\xi}}}+\dfrac{1}{f}h^{i\bar{j}}s_i\bar{s_j}, & \text{if}\ (A,B)=(\xi,\bar{\xi}).
    \end{cases}
  \end{equation}
 The Christoffel symbols can be calculated as follows:
 \begin{equation}
\begin{split}\label{Gamma}
 \Gamma^i_{\xi\xi}
 &=g^{i\bar{j}}\dfrac{\partial}{\partial\xi}g_{\xi\bar{j}}+g^{i\bar{\xi}}\dfrac{\partial}{\partial\xi}g_{\xi\bar{\xi}}\\
 &=\dfrac{1}{f}h^{i\bar{j}}\dfrac{\partial}{\partial\xi}\left(g_{\xi\bar{\xi}}\bar{s_j}\right)+g^{i\bar{\xi}}\dfrac{\partial}{\partial\xi}g_{\xi\bar{\xi}}\\
 &=\dfrac{1}{f}h^{i\bar{j}}\bar{s_j}\dfrac{\partial}{\partial\xi}g_{\xi\bar{\xi}}+\dfrac{1}{f}h^{i\bar{j}}\dfrac{\partial}{\partial\xi}\bar{s_j}g_{\xi\bar{\xi}}-\dfrac{1}{f}h^{i\bar{j}}\bar{s_j}\dfrac{\partial}{\partial\xi}g_{\xi\bar{\xi}}\\
 &=\dfrac{1}{f}h^{i\bar{j}}g_{\xi\bar{\xi}}\dfrac{\partial}{\partial\xi}\bar{s_j}.
\end{split}
\end{equation}
Therefore, the fibers of $\pi$ are totally geodesic if and only if for all $i=1,2,\ldots,n$:
\[
    h^{i\bar{j}}\dfrac{\partial}{\partial\xi}\bar{s_j}=0.
\]
This is equivalent to $[h]\big[\dfrac{\partial}{\partial\xi}\bar{s_1},\ldots,\dfrac{\partial}{\partial\xi}\bar{s_n}\big]^T=0$ and since $h$ is positive definite, it follows that $\dfrac{\partial}{\partial\xi}\bar{s_i}=0$ for all $i=1,2,\ldots,n$. 

As this condition is independent of time, once it holds at $t=0$, it is preserved at all time. 
\end{proof}
We can now derive the evolution equation for the dilation $f_t$:
\begin{Prop}
Suppose that $\pi: (M^{n+1},g(t))\to (B^n,h)$ with $g(t)$ is K$\ddot{\text{a}}$hler-Ricci flow on $M$ such that $\pi$ is homothetic conformal submersion with totally geodesic fibers at all time $t$, then the evolution equation of $f$ is as follows:
\begin{equation}\label{evolution of dilation}
    \dfrac{\partial f}{\partial t}=\Delta f-\dfrac{R^h}{n},
\end{equation}
where $R^h$ is the scalar curvature of $(B,h)$ at the projection point $\pi(p)$ on $B$. 
\end{Prop}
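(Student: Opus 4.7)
The plan is to produce a scalar equation for $\dot f$ by tracing the horizontal--horizontal block of the Ricci tensor, expressed in two different ways. From the matrix identity \eqref{ricci curvature} I have
\[
R_{i\bar j}=-\dot f\, h_{i\bar j}+s_i\bar s_j R_{\xi\bar\xi},
\]
while applying the K\"ahler formula $R_{A\bar B}=-\partial_A\partial_{\bar B}\log\det g$ to the block determinant $\det g=f^n(\det h)\,g_{\xi\bar\xi}$ (noting that $\det h$ is pulled back from $B$) yields both
\[
R_{i\bar j}=-n\partial_i\partial_{\bar j}\log f+R^h_{i\bar j}-\partial_i\partial_{\bar j}\log g_{\xi\bar\xi},\quad R_{\xi\bar\xi}=-n\partial_\xi\partial_{\bar\xi}\log f-\partial_\xi\partial_{\bar\xi}\log g_{\xi\bar\xi}.
\]
Equating the two expressions for $R_{i\bar j}$, tracing with $h^{i\bar j}$, substituting the $R_{\xi\bar\xi}$ formula, and isolating $\dot f$, I arrive at
\[
\dot f = D\log f + \tfrac{1}{n}D\log g_{\xi\bar\xi} - \tfrac{R^h}{n},
\]
where $D:=h^{i\bar j}\partial_i\partial_{\bar j}-|s|^2_h\,\partial_\xi\partial_{\bar\xi}$ and $|s|^2_h:=h^{i\bar j}s_i\bar s_j$.

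The next task is to identify the first two terms on the right with $\Delta f=g^{A\bar B}\partial_A\partial_{\bar B}f$. Substituting the inverse matrix \eqref{matrix} into $\Delta f$ and using the horizontally homothetic condition $\partial_i f=s_i\partial_\xi f$ (with its conjugate) together with the totally-geodesic condition $\partial_{\bar\xi}s_i=0$, the mixed $s_i\bar s_j$ terms collapse, leaving
\[
\Delta f=\tfrac{Df}{f}+\tfrac{1}{g_{\xi\bar\xi}}\partial_\xi\partial_{\bar\xi}f.
\]
A parallel direct computation shows $D\log f=Df/f$, so the proof reduces to the single identity $D\log g_{\xi\bar\xi}=n\,\partial_\xi\partial_{\bar\xi}f/g_{\xi\bar\xi}$.

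To establish this identity I exploit the K\"ahler closure $d\omega=0$, which in coordinates reads $\partial_\xi g_{i\bar j}=\partial_i g_{\xi\bar j}$. Expanding both sides using $g_{\xi\bar j}=\bar s_j g_{\xi\bar\xi}$ and $g_{i\bar j}=fh_{i\bar j}+s_i\bar s_j g_{\xi\bar\xi}$, together with $\partial_\xi\bar s_j=0$, the cross-terms cancel and produce the pointwise bridge
\[
(\partial_\xi f)\,h_{i\bar j}=(\partial_i\bar s_j)\,g_{\xi\bar\xi}.
\]
Applying $\partial_{\bar\xi}$ to this and tracing with $h^{i\bar j}$ gives an explicit expression for $h^{i\bar j}\partial_i\partial_{\bar\xi}\bar s_j$, which I substitute into a direct expansion of $D\log g_{\xi\bar\xi}$, obtained by differentiating the K\"ahler consequence $\partial_i\log g_{\xi\bar\xi}=\partial_\xi s_i+s_i\partial_\xi\log g_{\xi\bar\xi}$ in $\partial_{\bar j}$ and assembling the $D$-combination. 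After systematic cancellation of the $\partial_{\bar\xi}g_{\xi\bar\xi}$ terms the result is exactly $n\partial_\xi\partial_{\bar\xi}f/g_{\xi\bar\xi}$. The main obstacle is precisely this bookkeeping: without the bridge linking horizontal derivatives of $\bar s_j$ to vertical derivatives of $f$, the reduction of the horizontal-Laplacian-type object $D\log g_{\xi\bar\xi}$ to a purely vertical quantity is not visible from the trace identity alone, so the K\"ahler closure is the essential ingredient that collapses the evolution equation to a heat equation on $f$.
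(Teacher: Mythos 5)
Your proposal is correct; every step checks out, including the two auxiliary identities $\Delta f=Df/f+g_{\xi\bar\xi}^{-1}\partial_\xi\partial_{\bar\xi}f$ and $D\log g_{\xi\bar\xi}=n\,g_{\xi\bar\xi}^{-1}\partial_\xi\partial_{\bar\xi}f$, which I verified by direct expansion using $\partial_i f=s_i\partial_\xi f$, $\partial_{\bar\xi}s_i=0$, and the bridge $(\partial_\xi f)h_{i\bar j}=(\partial_i\bar s_j)g_{\xi\bar\xi}$. The organization differs from the paper's in a genuine though modest way. The paper starts from the full-trace flow identity $\partial_t\log\det g=\Delta\log\det g$, substitutes the factorization $\det g=f^n g_{\xi\bar\xi}\det h$, and must then compute $\Delta\ln g_{\xi\bar\xi}$ against the entire inverse metric \eqref{matrix}, tracking several cross terms before the cancellation \eqref{z}. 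You instead equate the block expression \eqref{ricci curvature} for $R_{i\bar j}$ (coming from time-differentiating the conformal condition) with the coordinate formula $R_{i\bar j}=-\partial_i\partial_{\bar j}\log\det g$, take only the partial trace with $h^{i\bar j}$, and eliminate $R_{\xi\bar\xi}$; this isolates $\dot f$ algebraically and confines all the differential-geometric work to the operator $D=h^{i\bar j}\partial_i\partial_{\bar j}-|s|^2_h\partial_\xi\partial_{\bar\xi}$. What your route buys is a cleaner separation between the linear algebra (the trace step) and the single analytic input, namely the K\"ahler-closure bridge \eqref{kahler property}, which both proofs ultimately rely on in the same place and in the same way; what it costs is the need to establish the two auxiliary identities above, which play the role of the paper's cancellations in \eqref{x}--\eqref{z}. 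As a side benefit, your derivation makes it transparent that the horizontal trace of $R_{i\bar j}$ is the only information needed, which foreshadows the subsequent proposition that the full (untraced) block identity forces $(B,h)$ to be K\"ahler--Einstein.
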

\begin{proof}
Along the K$\ddot{\text{a}}$hler-Ricci flow, the following evolution equation is well-known:
\[
    \dfrac{\partial}{\partial t}(\ln\det g)=\Delta(\ln\det g).
\]
Since $\det g=f^ng_{\xi\bar{\xi}}\det h$, the equation is equivalent to:
\begin{equation}\label{evolution}
    \Delta\ln g_{\xi\bar{\xi}}+n\Delta\ln f+\Delta\det h=\dfrac{\partial}{\partial t}\ln g_{\xi\bar{\xi}}+n\dfrac{\partial}{\partial t}\ln f.
\end{equation}
As before, we denote by $A,\bar{B}$ the indices corresponding to $z^1,z^2,\ldots, \xi$ and their conjugates and $i,\bar{j}$ the indices corresponding to $z^1,\ldots,z^n$ only. We can calculate each term of $(\ref{evolution})$ as follows:
\begin{flalign}
    \dfrac{\partial}{\partial t}\ln g_{\xi\bar{\xi}}
    &=-\dfrac{1}{g_{\xi\bar{\xi}}}R_{\xi\bar{\xi}}
    =\dfrac{1}{g_{\xi\bar{\xi}}}\dfrac{\partial^2}{\partial\xi\partial\bar{\xi}}\left(\ln g_{\xi\bar{\xi}}+n\ln f\right)&&\\\
    &=\dfrac{n}{fg_{\xi\bar{\xi}}}\dfrac{\partial^2f}{\partial\xi\partial\bar{\xi}}-\dfrac{n}{f^2g_{\xi\bar{\xi}}}\dfrac{\partial f}{\partial\xi}\dfrac{\partial f}{\partial\bar{\xi}}+\dfrac{1}{g_{\xi\bar{\xi}}^2}\dfrac{\partial^2g_{\xi\bar{\xi}}}{\partial\xi\partial\bar{\xi}}-\dfrac{1}{g_{\xi\bar{\xi}}^3}\dfrac{\partial g_{\xi\bar{\xi}}}{\partial\xi}\dfrac{\partial g_{\xi\bar{\xi}}}{\partial\bar{\xi}}&&
\end{flalign}

\begin{flalign}\label{term 2}
    n\Delta \ln f &=\dfrac{n\Delta f}{f}-\dfrac{n}{f^2}g^{A\bar{B}}\dfrac{\partial f}{\partial A}\dfrac{\partial f}{\partial \bar{B}}
    =\dfrac{n\Delta f}{f}-
    \dfrac{n}{f^2g_{\xi\bar{\xi}}}\dfrac{\partial f}{\partial\xi}\dfrac{\partial f}{\partial\bar{\xi}}&&
\end{flalign}
Here the second identity comes from the assumption that the gradient of $f$ is vertical during the flow. To calculate the first term, we will need the following identity which comes from the K$\ddot{\text{a}}$hler property of $g$: $\dfrac{\partial}{\partial z^i}g_{\xi\bar{\xi}}=\dfrac{\partial}{\partial \xi}g_{i\bar{\xi}}$:
\[
    \dfrac{\partial}{\partial z^i}g_{\xi\bar{\xi}}=\dfrac{\partial s_i}{\partial\xi}g_{\xi\bar{\xi}}+s_i\dfrac{\partial}{\partial\xi}g_{\xi\bar{\xi}}.
\]
From this we can calculate the following:
\begin{flalign}\label{term 1}
    \Delta\ln g_{\xi\bar{\xi}}
    &=\dfrac{1}{g_{\xi\bar{\xi}}}g^{A\bar{B}}\dfrac{\partial^2g_{\xi\bar{\xi}}}{\partial A\partial\bar{B}}-\dfrac{1}{g_{\xi\bar{\xi}}^2}g^{A\bar{B}}\dfrac{\partial g_{\xi\bar{\xi}}}{\partial A}\dfrac{\partial g_{\xi\bar{\xi}}}{\partial\bar{B}}&&\\
    &=\dfrac{1}{g_{\xi\bar{\xi}}}\left(\dfrac{1}{g_{\xi\bar{\xi}}}\dfrac{\partial^2g_{\xi\bar{\xi}}}{\partial\xi\partial\bar{\xi}}+g^{i\bar{j}}g_{\xi\bar{\xi}}\dfrac{\partial^2\bar{s_j}}{\partial z^i\partial\bar{\xi}}+g^{i\bar{j}}\dfrac{\partial\bar{s_j}}{\partial z^i}\dfrac{\partial g_{\xi\bar{\xi}}}{\partial\bar{\xi}}+g^{i\bar{j}}g_{\xi\bar{\xi}}\dfrac{\partial s_i}{\partial\xi}\dfrac{\partial\bar{s_j}}{\partial \bar{\xi}} \right)&&\\
    &\quad-\dfrac{1}{g_{\xi\bar{\xi}}^2}\left(\dfrac{1}{g_{\xi\bar{\xi}}}\dfrac{\partial g_{\xi\bar{\xi}}}{\partial\xi}\dfrac{\partial g_{\xi\bar{\xi}}}{\partial\bar{\xi}}+g^{i\bar{j}}g_{\xi\bar{\xi}}\dfrac{\partial s_i}{\partial \xi}\dfrac{\partial\bar{s_j}}{\partial\bar{\xi}} \right)
\end{flalign}
Referring all these calculations back to the equation $(\ref{evolution})$, after cancellation we shall get:
\begin{equation}\label{x}
    \dfrac{n\Delta f}{f}+\Delta\ln\det h+g^{i\bar{j}}\dfrac{\partial^2\bar{s_j}}{\partial z^i\partial\bar{\xi}}+\dfrac{1}{g_{\xi\bar{\xi}}}g^{i\bar{j}}\dfrac{\partial\bar{s_j}}{\partial z^i}\dfrac{\partial g_{\xi\bar{\xi}}}{\partial\bar{\xi}}=\dfrac{n}{f}\dfrac{\partial f}{\partial t}+\dfrac{n}{fg_{\xi\bar{\xi}}}\dfrac{\partial ^2f}{\partial\xi\partial\bar{\xi}}.
\end{equation}
Considering the difference:
\begin{equation}\label{y}
\begin{split}
    g^{i\bar{j}}\dfrac{\partial^2\bar{s_j}}{\partial z^i\partial\bar{\xi}}+\dfrac{1}{g_{\xi\bar{\xi}}}g^{i\bar{j}}\dfrac{\partial\bar{s_j}}{\partial z^i}\dfrac{\partial g_{\xi\bar{\xi}}}{\partial\bar{\xi}}-\dfrac{n}{fg_{\xi\bar{\xi}}}\dfrac{\partial ^2f}{\partial\xi\partial\bar{\xi}}
    &=\dfrac{1}{fg_{\xi\bar{\xi}}}\dfrac{\partial}{\partial\bar{\xi}}\left(h^{i\bar{j}}\dfrac{\partial\bar{s_j}}{\partial z^i}g_{\xi\bar{\xi}}-n\dfrac{\partial f}{\partial\xi}\right).
\end{split}
\end{equation}
Now as $g_{i\bar{j}}=fh_{i\bar{j}}+g_{\xi\bar{\xi}}s_i\bar{s_j}$ we get:
\[
    \dfrac{\partial}{\partial\xi}g_{i\bar{j}}=h_{i\bar{j}}\dfrac{\partial f}{\partial\xi}+\bar{s_j}\dfrac{\partial}{\partial\xi}(s_ig_{\xi\bar{\xi}})=h_{i\bar{j}}\dfrac{\partial f}{\partial\xi}+\bar{s_j}\dfrac{\partial}{\partial z^i}g_{\xi\bar{\xi}}.
\]
Here we used the facts that $g_{i\bar{\xi}}=s_ig_{\xi\bar{\xi}}$ and $\dfrac{\partial}{\partial\xi}\bar{s_j}=0$. On the other hands:
\[
    \dfrac{\partial}{\partial\xi}g_{i\bar{j}}=\dfrac{\partial}{\partial z^i}g_{\xi\bar{j}}=\dfrac{\partial}{\partial z^i}(\bar{s_j}g_{\xi\bar{\xi}})=\dfrac{\partial\bar{s_j}}{\partial z^i}g_{\xi\bar{\xi}}+\bar{s_j}\dfrac{\partial}{\partial z^i}g_{\xi\bar{\xi}}.
\]
Comparing the two equations above, we get:
\begin{equation}\label{kahler property}
    h_{i\bar{j}}\dfrac{\partial f}{\partial\xi}=\dfrac{\partial\bar{s_j}}{\partial z^i}g_{\xi\bar{\xi}}.
\end{equation}
From this it follows that:
\begin{equation}\label{z}
    h^{i\bar{j}}\dfrac{\partial\bar{s_j}}{\partial z^i}g_{\xi\bar{\xi}}-n\dfrac{\partial f}{\partial\xi}=h^{i\bar{j}}h_{i\bar{j}}\dfrac{\partial f}{\partial\xi}-n\dfrac{\partial f}{\partial \xi}=0.
\end{equation}
Combining all $(\ref{x}),(\ref{y})$ and $(\ref{z})$ we come up with the following simple evolution equation of $f$:
\[
    \dfrac{\partial f}{\partial t}=\Delta f+\dfrac{f\Delta\ln\det h}{n}=\Delta f-\dfrac{1}{n}h^{i\bar{j}}R^h_{i\bar{j}}=\Delta f-\dfrac{R^h}{n}.
\]
Where the term $R^h_{i\bar{j}}$ and $R^h$ are the Ricci curvature and scalar curvature of $(B,h)$ at the point $\pi(p)$. Notice that if $(B,h)$ is K$\ddot{\text{a}}$hler-Einstein, then the last term is constant. 
\end{proof}
\begin{Prop}
$(B,h)$ must be K$\ddot{\text{a}}$hler-Einstein manifold. 
\end{Prop}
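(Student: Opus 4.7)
The plan is to derive the Einstein condition $R^h_{i\bar{j}} = (R^h/n)h_{i\bar{j}}$ on $B$ by comparing two independent expressions for the horizontal components of $\mathrm{Ric}(g_t)$ and exploiting the pointwise freedom to adjust submersion coordinates. Concretely, I would equate the Ricci components obtained from the flow equation $\partial_t g_{i\bar{j}} = -R_{i\bar{j}}$ applied to the decomposition (\ref{matrix g})---giving $R_{i\bar{j}} = -\partial_t f\cdot h_{i\bar{j}} + s_i \bar{s}_j R_{\xi\bar\xi}$ as in (\ref{ricci curvature})---with the expression arising from $R_{A\bar{B}} = -\partial_A\partial_{\bar{B}}\log\det g$ applied to $\det g = f^n g_{\xi\bar\xi}\det h$, namely $R_{i\bar{j}} = R^h_{i\bar{j}} - n\partial_i\partial_{\bar{j}}\log f - \partial_i\partial_{\bar{j}}\log g_{\xi\bar\xi}$. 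Substituting the evolution equation $\partial_t f = \Delta f - R^h/n$ from (\ref{evolution of dilation}) yields the master identity
\[
  R^h_{i\bar{j}} - \tfrac{R^h}{n}h_{i\bar{j}} = -\Delta f\cdot h_{i\bar{j}} + s_i\bar{s}_j R_{\xi\bar\xi} + n\partial_i\partial_{\bar{j}}\log f + \partial_i\partial_{\bar{j}}\log g_{\xi\bar\xi}.
\]

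Next, I would fix an arbitrary point $p\in M$ and apply the holomorphic fiber-coordinate change $\tilde{\xi}=\xi+\psi(z)$ with $\psi(z)=\sum_i s_i(p)(z^i - z^i(p))$. This preserves $\pi$ and all the structural conditions while producing $\tilde{s}_i(p)=0$, killing the $s_i\bar{s}_j$ contribution at $p$. Moreover, the horizontal-gradient condition $\partial_i f = s_i\partial_\xi f$ forces $\partial_i f(p)=0$, so
\[
  \partial_i \partial_{\bar{j}}\log f(p) = \frac{\partial_i \bar{s}_j(p)\,\partial_{\bar{\xi}} f(p)}{f(p)} = \frac{h_{i\bar{j}}(p)\,|\partial_\xi f(p)|^2}{f(p)\,g_{\xi\bar\xi}(p)}
\]
after invoking the Kähler identity (\ref{kahler property}). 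A parallel but lengthier computation, using $\partial_i g_{\xi\bar\xi}=\partial_\xi g_{i\bar\xi}$, the totally geodesic condition $\partial_\xi \bar{s}_j = 0$, and both (\ref{kahler property}) and its conjugate $\partial_{\bar{j}}s_i = h_{i\bar{j}}\,\partial_{\bar{\xi}}f/g_{\xi\bar\xi}$, gives $\partial_i \partial_{\bar{j}}\log g_{\xi\bar\xi}(p) = h_{i\bar{j}}(p)\,\partial_\xi\partial_{\bar{\xi}} f(p)/g_{\xi\bar\xi}(p)$.

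Inserting these simplifications into the master identity shows that at $p$,
\[
  R^h_{i\bar{j}}(p) - \tfrac{R^h(p)}{n}h_{i\bar{j}}(p) = F(p)\,h_{i\bar{j}}(p),
\]
where $F(p)$ is a scalar built from $f$, $\Delta f$, and $g_{\xi\bar\xi}$. Contracting both sides with $h^{i\bar{j}}$ kills the left-hand side (since $h^{i\bar{j}} h_{i\bar{j}} = n$), forcing $nF(p)=0$ and hence $F(p)=0$. Since $p$ was arbitrary, $R^h_{i\bar{j}} = (R^h/n)h_{i\bar{j}}$ pointwise on $B$; when $\dim_\mathbb{C} B = 1$ this is automatic, and when $\dim_\mathbb{C} B \geq 2$ the Kähler Schur lemma upgrades this pointwise proportionality to a constant, concluding that $(B,h)$ is Kähler-Einstein.

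The main technical obstacle is the computation of $\partial_i\partial_{\bar{j}}\log g_{\xi\bar\xi}(p)$: several a priori non-vanishing contributions arising from $\partial_\xi s_i(p)$, $\partial_{\bar{\xi}}\bar{s}_j(p)$, and cross-interactions with derivatives of $g_{\xi\bar\xi}$ must be shown to cancel via the Kähler symmetries of $g$ together with (\ref{kahler property}) and its conjugate, leaving only the $h_{i\bar{j}}$-proportional remainder. Once that algebraic cancellation is in place, the rest of the argument is an elementary trace.
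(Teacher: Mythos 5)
Your proposal follows essentially the same route as the paper: both equate the conformal-submersion expression $R_{i\bar j}=(-\Delta f+\tfrac{R^h}{n})h_{i\bar j}+s_i\bar s_j R_{\xi\bar\xi}$ (obtained from the flow and the evolution equation for $f$) with $-\partial_i\partial_{\bar j}\log\det g$ computed from $\det g=f^n g_{\xi\bar\xi}\det h$, and then extract $R^h_{i\bar j}=\tfrac{R^h}{n}h_{i\bar j}$ from the resulting identity using (\ref{kahler property}). Your two execution refinements --- normalizing $s_i(p)=0$ via the fiber shift $\tilde\xi=\xi+\psi(z)$, and closing with a trace against $h^{i\bar j}$ instead of verifying the full cancellation --- are sound and streamline the paper's direct computation (and your remark that the pointwise proportionality is vacuous for $\dim_{\mathbb C}B=1$ and needs the Schur lemma for constancy when $\dim_{\mathbb C}B\geq 2$ is an honest caveat that applies equally to the paper's own argument).
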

\begin{proof}
From previous calculations, we have that along the flow:
\[
    R_{i\bar{j}}=-\dfrac{\partial f}{\partial t}h_{i\bar{j}}+\dfrac{R_{i\bar{\xi}}R_{\xi\bar{j}}}{R_{\xi\bar{\xi}}}=\left(-\Delta f+\dfrac{R^h}{n}\right)h_{i\bar{j}}+R_{\xi\bar{\xi}}s_i\bar{s_j}.
\]
Now using the fact that $R_{A\bar{B}}=-\dfrac{\partial^2}{\partial A\partial\bar{B}}\ln\det g$, we get the following identity:
\[
    \dfrac{\partial^2}{\partial z^i\partial\bar{z}^j}(\ln g_{\xi\bar{\xi}}+n\ln f+\ln\det h)=\left(\Delta f-\dfrac{R^h}{n}\right)h_{i\bar{j}}+s_i\bar{s_j}\dfrac{\partial^2}{\partial\xi\partial\bar{\xi}}(\ln g_{\xi\bar{\xi}}+n\ln f).
\]
Expanding both sides and notice that since $\nabla f$ is vertical, its Laplacian can be calculated as follows:
\[\Delta f=\dfrac{1}{g_{\xi\bar{\xi}}}\left(\dfrac{n}{f}\dfrac{\partial f}{\partial\xi}\dfrac{\partial f}{\partial \bar{\xi}}+\dfrac{\partial ^2f}{\partial\xi\partial\bar{\xi}}\right).\]
Together with $(\ref{kahler property})$ after cancellation we have:
\[
    \dfrac{R^h}{n}h_{i\bar{j}}=-\dfrac{\partial^2}{\partial z^i\partial \bar{z}^j}\ln\det h=R^h_{i\bar{j}}
\]
This holds for all $1\leq i,j\leq n$ if and only if $(B,h)$ is K$\ddot{\text{a}}$hler-Einstein manifold. 
\end{proof}
\subsection{Sufficient conditions}
\begin{proof}
We still work in local submersion coordinates. At time $t=0$, since $\pi$ is horizontally homothetic conformal submersion with totally geodesic fibers, the local representation of $g_0$ must be in the form given in $(\ref{matrix g})$. \\
Together with condition that $\dfrac{\partial s_i}{\partial\bar{\xi}}=0$ and $\nabla f$ is vertical, we can calculate the matrix representation of Ricci tensor as follows:
\begin{equation}\label{ricci curvature 1}
\text{Ric}_{A\bar{B}}=
\left[
\begin{array}{c|c}
\left(-\Delta f+\dfrac{R_B}{n}\right)h_{i\bar{j}}+ \dfrac{R_{i\bar{\xi}}R_{\xi\bar{j}}}{R_{\xi\bar{\xi}}}& s_iR_{\xi\bar{\xi}} \\
\hline
\overline{s_j}R_{\xi\bar{\xi}} & R_{\xi\bar{\xi}}
\end{array}
\right]
\end{equation}
Now consider the following evolution of K$\ddot{\text{a}}$hler metrics:
\begin{equation}
g(t)=
\left[
\begin{array}{c|c}
f_th_{i\bar{j}}+ s_i\bar{s_j}g_{\xi\bar{\xi}}(t)& s_ig_{\xi\bar{\xi}}(t) \\
\hline
\overline{s_j}g_{\xi\bar{\xi}}(t) & g_{\xi\bar{\xi}}(t)
\end{array}
\right]
\end{equation}
Same calculations as $\textbf{Lemma \ref{ratio}}$ show that as long as $g_{\xi\bar{\xi}}$ evolves as :
\begin{equation}
    \dfrac{\partial}{\partial t}g_{\xi\bar{\xi}}=-R_{\xi\bar{\xi}}=\dfrac{\partial^2}{\partial\xi\partial\bar{\xi}}\left(\ln g_{\xi\bar{\xi}}+n\ln f_t\right).
\end{equation}
As $f_t$ evolves as in the assumption, the above form will be the unique solution to the K$\ddot{\text{a}}$hler-Ricci flow, with the corresponding local representation of Ricci curvature tensor as in $(\ref{ricci curvature 1})$. This form of $g(t)$ clearly indicate that $\pi: (M,g(t))\to (B,h)$ is a conformal submersion with dilation $f_t$. Furthermore, by the assumption on the vertical of $f$ and the preservation of geodesic fibers, the preservation of horizontally homothetic conformal submersion follows. 
\end{proof}

From the evolution equation, we obtain the following uniform bound for $f_t$ and an upper bound for the norm of gradient of $f$ along the flow. 
\begin{Cor}\label{f bounded}
Under the flow, there exist constants $C_1, C_2 >0$ independent of $t$ such that on $M\times [0,T)$ we have: 
\begin{equation}
    C_1\leq f \leq C_2
\end{equation}
\end{Cor}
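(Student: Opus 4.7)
The plan is to apply the parabolic maximum principle to the heat-type evolution equation
\[\frac{\partial f}{\partial t} = \Delta_t f - \frac{R_B}{n}\]
established in the preceding proposition. The crucial simplification is that, now that $(B,h)$ is known to be K$\ddot{\text{a}}$hler-Einstein, the scalar curvature $R_B$ is a global constant on $B$, so the source term $R_B/n$ is a constant on all of $M \times [0, T)$.

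The natural maneuver is the substitution $F(t,x) := f(t,x) + (R_B/n)\,t$, which converts the equation into a homogeneous heat equation $\partial_t F = \Delta_t F$ (with respect to the evolving Laplacian) on the compact manifold $M$. The weak maximum principle for such parabolic equations on a closed manifold then yields that $\min_{x\in M} F(t,x)$ is non-decreasing and $\max_{x\in M} F(t,x)$ is non-increasing in $t$. Combined with the initial condition $F(0,\cdot)=f_0$, this translates into the pointwise two-sided estimate
\[\min_M f_0 - \frac{R_B}{n}\,t \;\leq\; f(t,x) \;\leq\; \max_M f_0 - \frac{R_B}{n}\,t \qquad \text{on } M\times[0,T).\]

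Since $T<\infty$ and $R_B$ is a fixed constant, both the upper and lower expressions are themselves uniformly bounded in $t$, immediately producing the constants $C_1$ and $C_2$. The upper bound $C_2$ is immediate. The mildly delicate point is positivity of the lower bound $C_1$, which is secured by the compactness of $M$ (ensuring $\min_M f_0 > 0$), the finiteness of $T$, and the fact that $f$ must remain a valid conformal dilation factor (hence strictly positive) throughout the interval $[0,T)$. No substantial new obstacle appears here: the heavy lifting was carried out in deriving the heat equation for $f$ and in verifying the K$\ddot{\text{a}}$hler-Einstein property of the base, and the present corollary reduces to this direct max-principle computation.
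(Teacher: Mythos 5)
Your upper bound is fine and matches the paper: apply the maximum principle to $\partial_t f = \Delta_t f - R_B/n$ with $R_B$ constant (K\"ahler--Einstein base) over the finite interval $[0,T)$. The problem is the lower bound, and the paper itself explicitly flags the trap you fall into. The maximum principle gives
\[
\min_M f_0 - \frac{R_B}{n}\,t \;\leq\; f(t,x),
\]
but when $R_B>0$ (the relevant case in the motivating examples, e.g.\ a projective base) this quantity decreases in $t$ and may cross zero well before $t=T$; finiteness of $T$ bounds it from below, but not by a \emph{positive} constant. Your attempt to rescue positivity by invoking ``$f$ must remain a valid dilation factor, hence strictly positive'' is circular as far as a \emph{uniform} bound is concerned: pointwise positivity of $f(\cdot,t)$ for each $t<T$ does not prevent $\inf_M f(\cdot,t)\to 0$ as $t\to T$, which is exactly the degeneration one must rule out.

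The paper closes this gap by a different mechanism that your proposal never touches: the cohomological hypothesis $[\omega_0]+Tc_1(K_M)=[\pi^*\omega_B]$ together with the Parabolic Schwarz Lemma yields $\omega_t\geq C\,\pi^*\omega_B$ uniformly on $[0,T)$, and since the conformal submersion structure gives $\operatorname{tr}_{\omega_t}\pi^*\omega_B = n/f_t$, the bound $n/f_t\leq C'$ produces the uniform positive lower bound $C_1 = n/C'$. So the lower bound is genuinely a consequence of the limiting-K\"ahler-class assumption, not of the heat equation; you need to bring that hypothesis into play to complete the proof.
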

\begin{proof}
Under the assumption $(\ref{initial})$, we have by the Parabolic Schwarz Lemma:
\begin{equation}
    \omega_t\geq C\pi^*\omega_B.
\end{equation}
Thus $tr_{\omega_t}\pi^*\omega_B=\dfrac{n}{f_t}\leq C$, this establishes the lower bound of $f_t$. (Noticing that we can use maximum principle for heat type evolution equation that $f$ satisfies, however since we do not know about the bound of $R^h$, the lower bound obtained this way is not guaranteed to be positive). 

The upper bound follows from applying maximum principle for $f$ under the evolution $(\ref{evolution of dilation})$.
\end{proof}
The following is a general fact for function evolving under heat-type equation along Ricci flow:
\begin{Prop}\label{bound of gradient}
Suppose that $f$ is a solution to the heat equation with respect to metrics $g(t)$ evolving by Ricci flow:
\begin{equation}
\dfrac{\partial f}{\partial t}=\Delta_{g(t)}f+\phi.
\end{equation}
with $\phi:M\to R$ is time independent function such that $||\nabla\phi||_{g(t)}\leq C$ for a uniform constant $C$. 
 Then there exists a uniform constant $C'$ such that:
 \begin{equation}
     ||\nabla f||_{g(t)}\leq C'
 \end{equation}
 for all $t\in [0,T)$. 
\end{Prop}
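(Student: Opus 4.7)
The plan is a Bernstein-type gradient estimate combined with the parabolic maximum principle on the closed manifold $M$ over the finite interval $[0,T)$. The first step is to derive a tractable evolution inequality for $H := \|\nabla f\|_{g(t)}^{2} = g^{i\bar j}\partial_{i}f\,\partial_{\bar j}f$. Differentiating $H$ in time picks up two contributions: a term from $\partial_{t}g^{i\bar j}$ proportional to $\mathrm{Ric}(\nabla f,\nabla f)$ (coming from $\partial_{t}\omega = -\mathrm{Ric}(\omega)$), and a term $\langle \nabla(\Delta f + \phi),\nabla f\rangle$ coming from the evolution equation of $f$. Invoking the Bochner formula
\[
\Delta H \;=\; 2|\nabla^{2}f|^{2} + 2\langle \nabla\Delta f,\nabla f\rangle + 2\,\mathrm{Ric}(\nabla f,\nabla f),
\]
the Ricci contributions cancel precisely, leaving the clean identity
\[
(\partial_{t}-\Delta)H \;=\; 2\langle \nabla\phi,\nabla f\rangle - 2|\nabla^{2}f|^{2}.
\]

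The second step is elementary. Applying Cauchy--Schwarz to the inner-product term together with the hypothesis $\|\nabla\phi\|_{g(t)}\leq C$, and discarding the nonpositive Hessian term, I obtain
\[
(\partial_{t}-\Delta)H \;\leq\; H + C^{2}
\]
on $M\times [0,T)$. Since $M$ is closed, the parabolic maximum principle applied to $He^{-t}$ gives
\[
\max_{M}H(\cdot,t) \;\leq\; e^{t}\bigl(\max_{M}\|\nabla f_{0}\|_{g_{0}}^{2} + C^{2}\bigr) - C^{2},
\]
and because $T<\infty$ the right-hand side is uniformly bounded. Taking square roots then supplies the constant $C'$.

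The main point to watch is the cancellation in the first step: it is the algebraic reason that Bernstein estimates along Ricci flow require no sign or pointwise control on $\mathrm{Ric}(g(t))$. Verifying it amounts to matching the factor arising from $\partial_{t}g^{i\bar j}$ under the chosen normalization of the K\"ahler--Ricci flow to the Ricci term produced by the Bochner formula, which is routine but must be done carefully. Once this is in place, the argument proceeds with only the scalar maximum principle and the hypotheses on $M$ (closed), $T$ (finite), and $\|\nabla\phi\|_{g(t)}$ (uniformly bounded); in particular, no information whatsoever on the Ricci or scalar curvature of $g(t)$ is needed.
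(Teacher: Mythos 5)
Your proposal is correct and follows essentially the same route as the paper: the same evolution identity $(\partial_t-\Delta)\|\nabla f\|^2 = 2\langle\nabla\phi,\nabla f\rangle - 2\|\nabla\nabla f\|^2$ (which the paper cites from the literature and you derive via Bochner plus the cancellation of the Ricci terms), the same Cauchy--Schwarz/AM--GM estimate, and the same maximum-principle conclusion using finiteness of $T$. The only difference is cosmetic: you absorb the constant into $H+C^2$ and use the integrating factor $e^{-t}$, while the paper bounds $u \le e^{kt}-1+c_0$ directly.
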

\begin{proof}
A standard calculation from \cite{6} yields the following identity:
\begin{equation}
    \dfrac{\partial}{\partial t}||\nabla f||^2=\Delta||\nabla f||^2-2||\nabla\nabla f||^2+2g(\nabla f,\nabla \phi).
\end{equation}
By Schwarz inequality we have:
\begin{equation}
    g(\nabla f,\nabla\phi)\leq ||\nabla f||||\nabla\phi||
\end{equation}
and by the bound of $||\nabla\phi||$, it follows:
\begin{equation}
    \dfrac{\partial}{\partial t}||\nabla f||^2\leq\Delta||\nabla f||^2+K||\nabla f||.
\end{equation}
Denote by $u=||\nabla f||^2$ and by AM-GM inequality: $2||\nabla f||\leq ||\nabla f||^2+1$ we get:
\begin{equation}
    \dfrac{\partial}{\partial t}u\leq \Delta u+K(u+1).
\end{equation}
Since $F(u)=K(u+1)$ is Lipschitz, by a standard application of maximum principal we have:
\begin{equation}
    u\leq e^{kt}-1+c_0,
\end{equation}
where $c_0=\max_{M\times 0}u$. Since time existence of the flow is finite, the bound of $||\nabla f||$ follows.
\end{proof}
In our case, $\phi=-\dfrac{R^h}{n}$ and we want to show that $||\nabla \phi||$ is bounded. This is obvious since $\dfrac{\partial R^h}{\partial \xi}=\dfrac{\partial R^h}{\partial \bar{\xi}}=0$ we have:
\begin{equation}
    ||\nabla R^h||^2_{g(t)}=g^{i\bar{j}}\dfrac{\partial R^h}{\partial z^i}\dfrac{\partial R^h}{\partial\bar{z^j}}=\dfrac{1}{f}h^{i\bar{j}}\dfrac{\partial R^h}{\partial z^i}\dfrac{\partial R^h}{\partial\bar{z^j}}=\dfrac{1}{f}||\nabla R^h||^2_{h}
\end{equation}
which is bounded since $f$ is uniformly bounded and $||\nabla R^h||_{h}$ is time-independent. 

\section{Singularity Analysis}
In this section we will prove $\textbf{Theorem \ref{main theorem}}$ in a slightly more general situation. We shall loosen the restriction on the dimension of $M$ and $B$ (other conditions remain unchanged). In this case, since we do not know if the vanishing of $T$ tensor is preserved under the flow, we will take it for granted and assume that $T=0$ all the time (this condition is automatically guaranteed in dimension one fibers case, as shown in section $3$). In addition, we prescribe a uniform bound of dilation $f$ and a upper bound on its gradient's norm. (these are results we derived in dimension one fibers case). 

The Cheeger-Gromov pointed convergence follows from the uniform bounded assumption of Riemannian tensors together with Hamilton's compactness and Perelman's no local collapsing theorems. Hence we are only interested in the splitting of universal cover of $(M_{\infty}, g_{\infty}(t))$. 

The following consequence of $\textit{holonomy splitting theorem}$ will tell us how to proceed:
\begin{Lemma}\label{holonomy}
Suppose that $(M,g)$ is a Riemannian manifold and that $TM=\mathcal{H}M\oplus\mathcal{V}M$ is an orthogonal decomposition of the tangent bundle of $M$.The O'Neill tensors $A$ and $T$ are defined for this decomposition as followed:
\[
    T_EF=\mathcal{V}\nabla_{\mathcal{V}E}\mathcal{H}F+\mathcal{H}\nabla_{\mathcal{V}E}\mathcal{V}F, \]
   \[ A_EF=\mathcal{V}\nabla_{\mathcal{H}E}\mathcal{H}F+\mathcal{H}\nabla_{\mathcal{H}E}\mathcal{V}F.\]
If both of the tensor vanish, then locally $(M,g)$ is isometric to a product $(N_1\times N_2, g_1\oplus g_2)$, such that $TN_1=\mathcal{H}M$ and $TN_2=\mathcal{V}M$. 
\end{Lemma}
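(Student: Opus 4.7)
The plan is to upgrade the vanishing of $A$ and $T$ into the statement that both distributions $\mathcal{H}M$ and $\mathcal{V}M$ are parallel under the Levi-Civita connection of $g$, and then invoke the local de Rham decomposition theorem.

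First I would unpack what each vanishing gives. From $T \equiv 0$, taking $E, F$ both vertical yields $\mathcal{H}\nabla_V W = T_V W = 0$, so $\nabla_V W$ is vertical for all vertical $V, W$; taking $E$ vertical and $F$ horizontal yields $\mathcal{V}\nabla_V X = T_V X = 0$, so $\nabla_V X$ is horizontal. Symmetrically, $A \equiv 0$ gives $\nabla_X Y$ horizontal for horizontal $X, Y$ and $\nabla_X V$ vertical for horizontal $X$ and vertical $V$. Combining these four statements, for an arbitrary tangent vector $E = \mathcal{H}E + \mathcal{V}E$ and a horizontal field $X$ one has
\[
\nabla_E X \;=\; \nabla_{\mathcal{H}E} X + \nabla_{\mathcal{V}E} X \;\in\; \mathcal{H}M,
\]
because both terms were just shown to be horizontal; and likewise $\nabla_E V \in \mathcal{V}M$ for vertical $V$. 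Hence both distributions are invariant under $\nabla$, i.e.\ they are parallel subbundles.

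Next I would extract the classical consequences of parallelism. Involutivity of $\mathcal{V}M$ follows from $\mathcal{H}[V,W] = \mathcal{H}\nabla_V W - \mathcal{H}\nabla_W V = T_V W - T_W V = 0$ (using the standard skew-symmetry of the horizontal part of $T$ on vertical inputs coming from symmetry of $\nabla$), and analogously $\mathcal{V}[X,Y] = A_X Y - A_Y X = 0$ for horizontal $X, Y$. By Frobenius, both distributions integrate to foliations. The leaves are totally geodesic because $\nabla_V W$ stays vertical and $\nabla_X Y$ stays horizontal, so the second fundamental forms of the leaves (restrictions of the respective O'Neill tensors) vanish.

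Finally I would invoke the local de Rham theorem: a Riemannian manifold whose tangent bundle admits a $\nabla$-parallel orthogonal splitting is locally isometric to the Riemannian product of the leaves through a chosen point. Concretely, around any $p \in M$ one picks small transverse leaves $N_1 \subset \mathcal{H}M$-foliation and $N_2 \subset \mathcal{V}M$-foliation through $p$, equipped with the induced metrics $g_1, g_2$; the map $(q_1, q_2) \mapsto \exp_{q_1}^{\mathcal{V}}\!(\cdot)$ built from the geodesic foliation structure produces a local isometry $(N_1 \times N_2, g_1 \oplus g_2) \to (M, g)$ because parallelism of the splitting forces the mixed components of the metric and of the curvature to vanish.

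The main obstacle is really just bookkeeping: making sure that the four parallelism identities above follow cleanly from the hypotheses with the chosen sign conventions of the O'Neill tensors, and that the local de Rham argument is applied to an open neighborhood (the global splitting need not hold, which is why the statement is local). Once parallelism of the splitting is established, the conclusion is a standard textbook consequence and no further analytic input is needed.
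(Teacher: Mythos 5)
Your proposal is correct and follows essentially the same route as the paper: both arguments reduce the vanishing of $A$ and $T$ to the statement that $\nabla_E X$ stays horizontal (equivalently, that the orthogonal splitting is parallel), and then conclude via the holonomy/de Rham splitting theorem. The extra remarks on involutivity and Frobenius are redundant once parallelism is established, but they do no harm.
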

\begin{proof}
According to $\textit{Holonomy splitting theorem}$, it suffices to prove that for vector field $H\in \mathcal{H}M$, we have $\nabla_XH\in \mathcal{H}M$ for any $X\in TM$. Since $X$ can be decomposed as $X_1+X_2$ with $X_2\in \mathcal{H}M$ and $X_2\in \mathcal{V}M$ we have:
\[\nabla_XH=\nabla_{X_1}H+\nabla_{X_2}H.\]
Now $0=T_{X_1}H=\mathcal{V}\nabla_{X_1}H$ since $\mathcal{V}H=0$, thus $\nabla_{X_1}H\in \mathcal{H}M$. Similarly $\nabla_{X_2}H\in\mathcal{H}M$ by the vanishing of tensor $A$, claiming that $\nabla_XH\in \mathcal{H}M$. \\
Suppose that $H_x\in \mathcal{H}M|_x$ and $H(s)$ is the parallel translation of $H_x$ along $\gamma(s)$. We claim that $H(s)\in\mathcal{H}M|_{\gamma(s)}$. Decompose $H(s)=\mathcal{H}H(s)+\mathcal{V}H(s)$. Then
\[0=\nabla_{\gamma'(s)}H(s)=\nabla_{\gamma'(s)}\mathcal{H}H(s)+\nabla_{\gamma'(s)}\mathcal{V}H(s).\]
Since $\nabla_{\gamma'(s)}\mathcal{H}H(s)\in \mathcal{H}M$ by above argument, it follows that $\nabla_{\gamma'(s)}\mathcal{V}H(s)$ is in $\mathcal{H}M$. Therefore:
\[\dfrac{d}{ds}||\mathcal{V}H(s)||^2=2g(\nabla_{\gamma'(s)}\mathcal{V}H(s),\mathcal{V}H(s))=0.\]
Thus, $\mathcal{V}H(s)$ vanishes and then $H(s)$ remains inside $\mathcal{H}M$ along parallel transport. Apply splitting theorem we prove the lemma.
\end{proof}
To apply this theorem, we will decompose the tangent bundle of $(M_\infty, g_\infty(t))$ into orthogonal sub-bundles, then show that the corresponding $A$ and $T$ tensors vanish. By the assumption that the the conformal submersion condition is preserved along the flow, the natural decomposition of $(M_\infty, g_\infty(t))$ is just the limiting of horizontal and vertical distribution $\mathcal{H}_{g_i(t)}M$ and $\mathcal{V}_{g_i(t)}M$ as $i$ goes to $\infty$. Noticing that the horizontal distribution $\mathcal{V}M$ is fixed for all time because the conformal map $\pi$ is unchanged. Therefore, $\mathcal{H}_\infty M$ is the orthogonal complement of $\mathcal{V}M=\ker(\pi_*)$ with respect to $g_\infty(t)$.

Since we already proved that $T$ tensor vanishes along the flow, its limit on $M_\infty$ also vanishes and thus we are left to prove the vanishing of $A$ tensor on the Cheeger-Gromov limit. 

First, we will calculate the norm of $A$ tensor, with the help of $\textbf{Lemma \ref{A_1}}$ and $\textbf{Lemma \ref{vertical of Lie}}$. Suppose that $X_1,\ldots, X_m$ and $V_1, \ldots, V_k$ are the orthonormal bases for the horizontal and vertical distributions, respectively, then we have:
\begin{equation}\label{norm A}
    ||A||^2=\sum_{i,j}||A_{X_i}X_j||^2+\sum_{p,q}||A_{X_p}V_q||^2.
\end{equation}
We are left to calculate $||A_{X_p}V_q||^2$. This can be done as follows:
\[
    A_{X_p}V_q=\mathcal{H}\nabla_{X_p}V_q=g(\nabla_{X_p}V_q,X_l)X_l=-g(V_q,\nabla_{X_p}X_l)X_l.
\]
Thus
\[
    ||A_{X_p}V_q||^2=\sum_{l=1}^{m}(g(V_q,\nabla_{X_p}X_l))^2.
\]
Thus if we take the sum of these expressions over $p,q$, we get that:
\[
    \sum_{p,q}||A_{X_p}V_q||^2=\sum_{p,q,l}(g(V_q,\nabla_{X_p}X_l))^2=\sum_{p,l}||\mathcal{V}\nabla_{X_p}X_l||^2=\sum_{p,l}||A_{X_p}X_l||^2.
\]
Therefore:
\begin{equation}\label{A}
    ||A||^2=2\sum_{i,j}||A_{X_i}X_j||^2.
\end{equation}
Combining with $\textbf{Lemma}$ $ (\ref{vertical of Lie})$, we get the following important estimate for the norm of $A$ tensor along the flow:
\begin{Prop}
There exists a constant $C$ which depends only on $M, B$ and the initial metrics such that:
\begin{equation}\label{estimate A}
    ||A||_{g(t)}^2\leq C||\nabla \ln f_t||_{g(t)}^2=C\dfrac{||\nabla f_t||_{g(t)}^2}{f_t^2}.
\end{equation}
\end{Prop}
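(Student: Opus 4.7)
The plan is to combine equation $(\ref{A})$ with the two explicit formulas for $A_XY$ coming from \textbf{Lemma \ref{A_1}} and \textbf{Lemma \ref{vertical of Lie}}, and then bound the resulting quantity by a dimensional constant times $\|\nabla \ln f_t\|^2$. Concretely, I would pick a local orthonormal horizontal frame $\{X_1,\dots,X_m\}$ with respect to $g(t)$, and insert the identity
\[
A_{X_i}X_j=\tfrac{1}{2}\bigl(\mathcal{V}[X_i,X_j]-g(X_i,X_j)\,\mathcal{V}\nabla\ln f_t\bigr)
=\tfrac{1}{2}\bigl(\omega(X_i,X_j)\,J(\nabla\ln f_t)-g(X_i,X_j)\,\mathcal{V}\nabla\ln f_t\bigr)
\]
into $\|A\|^2=2\sum_{i,j}\|A_{X_i}X_j\|^2$.

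Next I would simplify using the standing hypotheses: $\nabla f_t$ is vertical by assumption, so $\mathcal{V}\nabla\ln f_t=\nabla\ln f_t$; and since the fibers are complex submanifolds (the vertical distribution is $J$-invariant), $J(\nabla\ln f_t)$ is also vertical. Both vectors have norm equal to $\|\nabla\ln f_t\|$, and Kähler compatibility gives $g(J\nabla\ln f_t,\nabla\ln f_t)=0$. This kills the cross term in $\|A_{X_i}X_j\|^2$ and yields
\[
\|A_{X_i}X_j\|^2=\tfrac{1}{4}\bigl(\omega(X_i,X_j)^2+g(X_i,X_j)^2\bigr)\|\nabla\ln f_t\|^2.
\]

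Summing over the orthonormal horizontal frame, the factors $\sum_{i,j}g(X_i,X_j)^2$ and $\sum_{i,j}\omega(X_i,X_j)^2$ are each bounded by a constant depending only on the (real) dimension of $\mathcal{H}M$, which is fixed and independent of $t$. Substituting into $(\ref{A})$ gives
\[
\|A\|_{g(t)}^2\le C\|\nabla\ln f_t\|_{g(t)}^2=C\,\frac{\|\nabla f_t\|_{g(t)}^2}{f_t^2},
\]
with $C$ depending only on the dimensions of $M$ and $B$ (and, implicitly, on the initial geometry through the fixed orthogonal decomposition $TM=\mathcal{H}M\oplus\mathcal{V}M$). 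There is no real obstacle: the only subtle point to check carefully is that $J(\nabla\ln f_t)$ is genuinely vertical so that the two vector contributions to $A_{X_i}X_j$ live in $\mathcal{V}M$ (making the pointwise norm computation clean), and that the sums $\sum\omega(X_i,X_j)^2$ and $\sum g(X_i,X_j)^2$ over an orthonormal horizontal frame are universal constants rather than quantities depending on $t$.
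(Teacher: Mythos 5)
Your proposal is correct and follows exactly the route the paper intends: substituting Lemma \ref{A_1} together with Lemma \ref{vertical of Lie} into the identity $\|A\|^2=2\sum_{i,j}\|A_{X_i}X_j\|^2$, using the horizontal homothety ($\mathcal{H}\nabla f_t=0$) and the $J$-invariance of the vertical distribution to reduce everything to $\|\nabla\ln f_t\|^2$ times a dimensional constant. The paper itself omits these details (it states the proposition immediately after deriving $(\ref{A})$ with only the phrase ``combining with Lemma \ref{vertical of Lie}''), so your write-up, including the observation that the cross term vanishes since $g(J\nabla\ln f_t,\nabla\ln f_t)=0$, supplies precisely the intended argument.
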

With respect to the re-scaling of the metrics: 
\[
    g_i(t)=K_ig(t_i+K_i^{-1}t),
\]
the map $\pi: (M, g_i(t))\rightarrow (B,h)$ is still a horizontally homothetic conformal submersion, with the dilation re-scaled to $K_if_{t_i+K_i^{-1}t}$. Under this, the norm square of $A$ re-scales as follows:
\begin{equation}
    ||A||_{g_i(t)}^2=\dfrac{1}{K_i}||A||_{g(t_i+K_i^{-1}t)}.
\end{equation}
While the quantity $||\nabla \ln f||^2$ does not re-scale. There for we have:
\begin{equation}\label{A_3}
    ||A||_{g_i(t)}^2\leq\dfrac{C}{K_i}\dfrac{||\nabla f||_{g(t_i+K_i^{-1}t)}^2}{f^2}.
\end{equation}
We want the right hand side of $(\ref{A_3})$ to tend to $0$ as $i \rightarrow \infty$. By the way $K_i$ is chosen, we know that $K_i\rightarrow \infty$, and also $f$ is uniformly bounded by $\textbf{Corollary \ref{f bounded}}$. Furthermore, it follows from $\textbf{Proposition \ref{bound of gradient}}$ that the norm of the gradient of $f$ is bounded from above during the flow. Therefore the the norm of O'Neill $A$ tensor tends to $0$ as $i\to \infty$.

As a result, with respect to the limiting decomposition on $(M_\infty, g_\infty(t))$, the O'Neill tensor $A_{g_\infty(t)}$ vanishes and thus the splitting of the universal cover of $(M_\infty)$ into $N_1\times N_2$ follows from $\textbf{Lemma \ref{holonomy}}$. Moreover, each $N_i$ is K$\ddot{\text{a}}$hler since their holonomy groups are subgroups of the unitary groups. Hence, the K$\ddot{\text{a}}$hler-Ricci solution $g_\infty(t)$ splits isometrically into $h_1(t)\oplus h_2(t)$. \\
Suppose that $TN_1=\mathcal{H}M_\infty$ is the limit of $\mathcal{H}_{g_i(t)}M$ when $i\rightarrow \infty$. We shall prove that $N_1$ is flat by calculating its sectional curvatures of the underlying real structure.

The following identity from \cite{4} is useful here:
\begin{Lemma}
Suppose that $\pi: (M,g)\rightarrow (B,h)$ is horizontally homothetic conformal submersion of Riemannian manifolds with dilation $f$. Then at any point $p\in M$ and horizontal vectors $x,y \in T_pM$ we have:
\begin{equation}\label{sectional}
    \dfrac{1}{f}\kappa_B(\pi_*x\wedge\pi_*y)=\kappa_M(x\wedge y)+3||A_XY||_g^2+||\nabla\ln\dfrac{1}{f}||_g^2,
\end{equation}
where $\kappa(x\wedge y)$ denotes the sectional curvature of the plane spanned by vectors $x,y$ and $X,Y$ are vector fields in $M$ which extend $x$ and $y$. 
\end{Lemma}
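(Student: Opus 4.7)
The plan is to adapt O'Neill's classical horizontal sectional curvature formula for Riemannian submersions to the conformal setting, and to exploit the horizontally homothetic condition to collapse cross terms into the clean form stated. The starting point is a comparison of the Levi-Civita connections of $(M,g)$ and $(B,h)$ when applied to horizontal lifts. Let $\widetilde{X},\widetilde{Y}$ denote the horizontal lifts of vector fields $X,Y$ on $B$; one derives the conformal analogue of the standard lifting identity, namely
\[
\mathcal{H}\nabla^{M}_{\widetilde{X}}\widetilde{Y} \;=\; \widetilde{\nabla^{B}_{X}Y} \;+\; \tfrac{1}{2}\bigl(\widetilde{X}(\ln f)\,\widetilde{Y}+\widetilde{Y}(\ln f)\,\widetilde{X}-g(\widetilde{X},\widetilde{Y})\,\mathcal{H}\nabla\ln f\bigr),
\]
together with $\mathcal{V}\nabla^{M}_{\widetilde{X}}\widetilde{Y}=A_{\widetilde{X}}\widetilde{Y}$. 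The horizontally homothetic hypothesis $\mathcal{H}\nabla f=0$ kills the symmetric part on the right, so on horizontal lifts the horizontal piece of $\nabla^M$ simply coincides with the pullback of $\nabla^B$.

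Next I would compute $g\bigl(R^{M}(\widetilde{X},\widetilde{Y})\widetilde{Y},\widetilde{X}\bigr)$ directly, expanding $R^{M}$ via the definition and substituting the connection formulas above. The calculation naturally decomposes the result into three packages. The first package involves only the $\widetilde{\nabla^{B}}$ terms and reproduces the base sectional curvature, with a factor of $\tfrac{1}{f}$ arising because $\|\pi_{*}x\wedge\pi_{*}y\|_{h}^{2}=f^{-2}\|x\wedge y\|_{g}^{2}$ rescales the normalization denominator of $\kappa_{B}$. The second package consists of the commutator/vertical interactions and reproduces the classical $3\|A_{\widetilde{X}}\widetilde{Y}\|^{2}$ O'Neill correction, obtained in exactly the same way as in the Riemannian submersion case by invoking the Bianchi-style identity
\[
g(A_{\widetilde{X}}\widetilde{Y},A_{\widetilde{X}}\widetilde{Y})=\tfrac{1}{4}\|\mathcal{V}[\widetilde{X},\widetilde{Y}]\|^{2}
\]
(using Lemma \ref{A_1} with the horizontally homothetic simplification). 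The third package collects all derivatives of $\ln f$; after using $\mathcal{H}\nabla\ln f=0$ twice, the cross terms die and the survivors combine into $\|\nabla\ln(1/f)\|_{g}^{2}$.

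Assembling the three packages and normalizing by $\|x\wedge y\|_{g}^{2}=1$ (choosing $x,y$ orthonormal in $g$ without loss of generality) gives precisely the claimed identity. The main technical obstacle will be the bookkeeping of the conformal factor $f$ across the various inner products: every time one switches between $g$ on horizontal vectors and $\pi^{*}h$ one picks up factors of $f$ or $f^{-1}$, and these must be consistently accounted for both in the curvature tensor expansion and in the sectional curvature denominators. A secondary but manageable subtlety is that the horizontal lift $\widetilde{X}$ is not parallel in the vertical direction, so $\widetilde{Y}(\ln f)$-type terms are potentially non-vanishing a priori; the horizontally homothetic hypothesis is what makes these vanish and is the crucial structural input allowing the final formula to depend only on the vertical gradient $\nabla\ln(1/f)$ and the O'Neill tensor $A$.
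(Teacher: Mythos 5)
The paper does not actually prove this lemma: it is quoted verbatim from the reference \cite{4} (Gudmundsson's work on horizontally conformal submersions), so there is no internal proof to compare against. Your strategy --- adapting O'Neill's horizontal curvature identity to the conformal setting and letting the horizontally homothetic condition $\mathcal{H}\nabla f=0$ kill the symmetric correction in the connection formula --- is the standard route and is essentially how the cited source derives the formula, so the outline is sound. Two points deserve care. First, a minor one: the factor $\tfrac1f$ on $\kappa_B$ does not come solely from $\|\pi_*x\wedge\pi_*y\|_h^2=f^{-2}$ (that alone would give $f^{-2}$); the compensating factor of $f$ comes from converting the final $h$-inner product in the pulled-back curvature contraction to $g$, since $g=f\,\pi^*h$ on horizontal vectors. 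Your stated normalization $\|x\wedge y\|_g^2=1$ also means the identity $\|A_{\widetilde X}\widetilde Y\|^2=\tfrac14\|\mathcal{V}[\widetilde X,\widetilde Y]\|^2$ from Lemma~\ref{A_1} is legitimate only because $g(\widetilde X,\widetilde Y)=0$.

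The second point is the genuine risk in your ``third package.'' If you import O'Neill's Riemannian-submersion curvature identity for four horizontal arguments verbatim, the term $\|\nabla\ln(1/f)\|_g^2$ never appears: in that identity the contributions $g(A_XX,A_YY)$ are silently zero because O'Neill's $A$ is alternating on horizontal vectors. In the conformal case it is not --- by Lemma~\ref{A_1} with $Y=X$ one has
\begin{equation*}
A_{\widetilde X}\widetilde X=-\tfrac12\,g(\widetilde X,\widetilde X)\,\mathcal{V}\nabla\ln f\neq 0,
\end{equation*}
and it is precisely the $g(A_{\widetilde X}\widetilde X,A_{\widetilde Y}\widetilde Y)$-type cross terms (together with the modified $(\nabla A)$ terms in the conformal analogue of O'Neill's equations) that assemble into $\|\nabla\ln(1/f)\|_g^2$. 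Your sketch says ``the cross terms die and the survivors combine'' without identifying this mechanism; taken literally, dropping the cross terms loses the gradient term entirely. To close the proof you must rederive the horizontal curvature identity from the conformal connection formula rather than borrow O'Neill's, and track these diagonal $A$-terms explicitly.
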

Now suppose that $(x_1,\ldots,x_k,y_n\ldots,y_k)$ are local submersion coordinates around $p\in M$ such that $\pi(x_1,\ldots y_k)=(x_1,\ldots,x_n)$. Taking $x^i=\mathcal{H}_{g_i(t)}\dfrac{\partial}{\partial x_i}$ and $y^i=\mathcal{H}_{g_i(t)}\dfrac{\partial}{\partial x_j}$ into the equation $(\ref{sectional})$ then take limit as $i\rightarrow 0$. 
Since the metric is being re-scaled as $g_i(t)=K_ig(t_1+K^{-1}t)$, the dilation is re-scaled to $K_if$. As we already proved that $||A||_{g_i(t)}\rightarrow 0$ and the gradient of $f_t$ being bounded, we have:
\begin{equation}
    \kappa_M(x^{\infty}\wedge y^{\infty})=0
\end{equation}
for $x^{\infty},y^{\infty} \in \mathcal{H}_{g_\infty(t)}(M)=TN_1$. Therefore, $(N_1,h_1(t))$ is a flat solution of K$\ddot{\text{a}}$hler-Ricci flow and the $\textbf{Theorem \ref{main theorem}}$ follows. 

\section{Formation of type I singularity in dimension one fibers}
For the case $\dim_{\mathbb{C}}M=n+1, \dim_{\mathbb{C}}B=n$, suppose that $\{x_1,\ldots,x_{2n}, v_1,v_2\}$ is an orthonormal basis of $T_\mathbb{R}M$ and that $x_i, v_j$ are in the horizontal and vertical distribution, respectively. We shall find asymptotic behaviors of the scalar curvature and norm of Riemannian tensor of $M$. The assumptions about conformal submersion in the introduction remain unchanged. 

We shall first analyze the relation between the norm of Riemannian tensor and scalar curvature and claim that as the time approaches singularity, the dominant factors in their expressions are similar. This directly leads to the exclusion of type II. 

The following propositions say that the dominant factors in the expressions of Riemannian tensor as well as scalar curvature come from the contribution of the vertical distribution. This is expected since we knew that under the Cheeger-Gromov convergence, the horizontal distribution becomes flat and hence does not contribute in the calculation of curvatures. Thus, we are left dealing with mixed-type terms.  
\begin{Prop}
Suppose that $\pi: (M,g) \to (B, h)$ is horizontally homothetic conformal submersion of K$\ddot{\text{a}}$hler manifolds with totally geodesic fibers ($T$ tensor vanishes). Then:
\[R(A,B,C,D)=0\]
if among $A,B,C,D$ there are three vectors come from the horizontal distribution and the other is vertical, or vice verse. 
\end{Prop}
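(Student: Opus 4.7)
The plan is to apply O'Neill's structural equations for a Riemannian submersion in the two mixed configurations, leveraging $T \equiv 0$ and the explicit form of the $A$ tensor from Lemmas~\ref{A_1} and~\ref{vertical of Lie}. I would treat the two cases separately.

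For three vertical inputs $U,V,W$ and one horizontal $X$, O'Neill's second fundamental equation gives
\[
  R(U,V,W,X) \;=\; \langle (\nabla_U T)_V W - (\nabla_V T)_U W,\, X\rangle,
\]
which vanishes immediately since $T \equiv 0$ as a tensor on $M$ forces $\nabla T \equiv 0$.

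For three horizontal inputs $X,Y,Z$ and one vertical $V$, I would first establish the specialized identity (valid whenever $T = 0$)
\[
  \mathcal V R(X,Y)Z \;=\; (\nabla_X A)_Y Z - (\nabla_Y A)_X Z,
\]
by decomposing $R(X,Y)Z = \nabla_X\nabla_YZ - \nabla_Y\nabla_XZ - \nabla_{[X,Y]}Z$ and projecting onto $\mathcal V M$, using $\nabla_X Y = \mathcal H\nabla_X Y + A_X Y$ on horizontal pairs and $\nabla_V X \in \mathcal H M$ for vertical $V$ and horizontal $X$ (a consequence of $T=0$). Substituting the formula $A_Y Z = \tfrac12\bigl(\omega(Y,Z)\,J\nabla\ln f - g(Y,Z)\,\nabla\ln f\bigr)$ from Lemma~\ref{vertical of Lie} and invoking $\nabla\omega = \nabla g = 0$ together with $\nabla J = 0$ (K\"ahler), the right-hand side simplifies to a combination of $\alpha_X := \mathcal V\nabla_X\nabla\ln f$ and $\alpha_Y$; after the $X \leftrightarrow Y$ antisymmetrization it becomes
\[
  \tfrac12\bigl[(\omega(Y,Z)J - g(Y,Z)\,\mathrm{id})\alpha_X - (\omega(X,Z)J - g(X,Z)\,\mathrm{id})\alpha_Y\bigr].
\]

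The crux is then to show $\alpha_X = 0$ for every horizontal $X$, which would finish the argument. By symmetry of the Hessian of $\ln f$,
\[
  g(\alpha_X, V) \;=\; \mathrm{Hess}(\ln f)(X, V) \;=\; \mathrm{Hess}(\ln f)(V, X) \;=\; V(X\ln f) - (\nabla_V X)\,\ln f,
\]
and both terms on the right vanish: horizontal homotheticity gives $X\ln f = 0$, while $T = 0$ forces $\nabla_V X$ to be horizontal, hence annihilating $\ln f$ (whose gradient is vertical). The main technical obstacle will be the careful bookkeeping in deriving the intermediate identity $\mathcal V R(X,Y)Z = (\nabla_X A)_Y Z - (\nabla_Y A)_X Z$ and in tracking horizontal/vertical projections through the substitution; the final Hessian-symmetry argument is short and clean.
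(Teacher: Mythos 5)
Your proof is correct, and for the harder half it takes a genuinely different route from the paper. For the case of three vertical entries and one horizontal, the two arguments are essentially the same observation dressed differently: the paper shows directly that $T=0$ keeps $\nabla_{V_i}V_j$, $[V_i,V_j]$ and hence $R(V_1,V_2)V_3$ vertical, while you quote the Codazzi--O'Neill identity expressing this curvature component through $\nabla T$, which vanishes because $T$ does. For three horizontal entries and one vertical, the paper derives the pointwise formula $\nabla_VX=g(\nabla\ln f,V)X+g(\nabla\ln f,JV)JX$ from the Koszul formula and the closedness of $\omega$, and then asserts that substituting it into $R(X,V)Y$ annihilates the horizontal part, leaving the actual cancellation to the reader; you instead pass through the O'Neill identity for $\mathcal{V}R(X,Y)Z$ in terms of $\nabla A$, insert the closed form $A_YZ=\tfrac12\left(\omega(Y,Z)J\nabla\ln f-g(Y,Z)\nabla\ln f\right)$ coming from Lemmas \ref{A_1} and \ref{vertical of Lie}, and reduce everything to the single quantity $\mathcal{V}\nabla_X\nabla\ln f$, which you kill by Hessian symmetry together with horizontal homotheticity and $T=0$. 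What your route buys is that the final vanishing is isolated in one transparent two-line computation rather than an unstated cancellation; what the paper's route buys is an explicit formula for $\nabla_VX$ of independent use in the later curvature computations. Two bookkeeping points to nail down when you write this out: first, your intermediate identity $\mathcal{V}R(X,Y)Z=\mathcal{V}\bigl((\nabla_XA)_YZ-(\nabla_YA)_XZ\bigr)$ does hold once $T=0$, because the term $A_{\mathcal{H}[X,Y]}Z$ arising from $\nabla_{[X,Y]}Z$ is exactly absorbed by the $A_{\nabla_XY}Z-A_{\nabla_YX}Z$ corrections in the tensorial derivative; second, you implicitly use $\mathcal{V}(JW)=J\mathcal{V}W$, which is legitimate here because the fibers are complex submanifolds so $J$ preserves the vertical distribution.
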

\begin{proof}
Firstly if there are three vertical and one horizontal vectors: by Bianchi identity, we just need to prove:
\begin{equation}
    R(V_1,V_2,X, V_3)=g(R(V_1,V_2)V_3,X)=0.
\end{equation}
For $V_i\in \mathcal{V}M$ and $X\in \mathcal{H}M$. We have:
\begin{equation}\label{vertical}
    R(V_1,V_2)V_3=\nabla_{V_1}\nabla_{V_2}V_3-\nabla_{V_2}\nabla_{V_1}V_3-\nabla_{[V_1,V_2]}V_3.
\end{equation}
We will prove that this vector field is vertical. By the vanishing of $T$ tensor, it follows that $\mathcal{H}\nabla_{V_i}V_j=0$ and thus $\nabla_{V_i}V_j$ is vertical. Therefore, $\nabla_{V_i}\nabla_{V_j}V_k$ and also $[V_i,V_j]=\nabla_{V_i}V_j-\nabla_{V_j}V_i$ are vertical. Thus the expression in $(\ref{vertical})$ is vertical.\\
For the second case, we need to prove that $\mathcal{H}R(X,V)Y=0$ for horizontal $X, Y$ and vertical $V$. This uses some computations in $\textbf{Lemma \ref{vertical of Lie}}$. Specifically for any vertical vector $Z$ by the Koszul formula we have:
\begin{equation}\label{1}
    2g(\nabla_VX,Z)=Vg(X,Z)-g([X,Z],V)
\end{equation}
as $[X,V], [Y,V]$ are both vertical. Furthermore from the identity $(\ref{closed omega})$, we deduce that $g([X,Z],V)=-JVg(JX,Z)$ and therefore $(\ref{1})$ becomes:
\begin{equation}\label{2}
    2g(\nabla_VX,Z)=Vg(X,Z)+JVg(JX,Z).
\end{equation}
Now as $X,Z$ are horizontal, we have:
\begin{equation}
    Vg(X,Z)=V(f.h(\pi_*X,\pi_*Z))=h(\pi_*X,\pi_*Z)V(f)=g(\nabla\ln f,V)g(X,Z).
\end{equation}
And similar for $JVg(JX,Z)$ and thus from $(\ref{2})$ we have:
\begin{equation}
    \nabla_VX=g(\nabla\ln f,V)X+g(\nabla\ln f,JV)JX.
\end{equation}
Using this to calculate $\mathcal{H}R(X,V)Y$ we can see that it equals to $0$. The proposition is proved. 
\end{proof}

The following is a direct consequence of $\textbf{Proposition 3.1}$ in \cite{11} is useful in calculating $||Rm||$:
\begin{Prop} Suppose that $\pi: (M,g)\to (B,h)$ is a horizontally homothetic conformal submersion of K$\ddot{\text{a}}$hler manifolds with totally geodesic fiber. Then for all unit horizontal and vertical vector fields $x_j,v_i$ we have:
\begin{equation}
R(v_i,x_j,v_i,x_j)=-\dfrac{1}{2}\left(g(\nabla_{v_i}\nabla\ln f,v_i)+g(\nabla\ln f,U)^2\right)+||A_{x_j}v_i||^2
\end{equation}
\end{Prop}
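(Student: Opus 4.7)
The plan is to expand $R(v_i, x_j, v_i, x_j) = g(R(v_i, x_j) x_j, v_i)$ directly from the definition
\[
R(v_i, x_j) x_j = \nabla_{v_i} \nabla_{x_j} x_j - \nabla_{x_j} \nabla_{v_i} x_j - \nabla_{[v_i, x_j]} x_j,
\]
and to handle each of the three pieces with the tools already built in the paper. The key ingredients are the explicit formula
\[
\nabla_V X = g(\nabla \ln f, V)\, X + g(\nabla \ln f, JV)\, JX
\]
for $V$ vertical and $X$ horizontal (derived inside the proof of the preceding proposition), together with \textbf{Lemma \ref{A_1}}, which for a unit horizontal $x_j$ gives $A_{x_j} x_j = -\dfrac{1}{2}\mathcal{V}\nabla \ln f$ (using $\mathcal{V}[x_j,x_j]=0$).

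First I would write $\nabla_{x_j} x_j = \mathcal{H} \nabla_{x_j} x_j + A_{x_j} x_j$ and apply $\nabla_{v_i}$ to it. Pairing the result with $v_i$, the vertical piece $A_{x_j} x_j = -\dfrac{1}{2} \mathcal{V} \nabla \ln f$ produces the Hessian contribution $-\dfrac{1}{2} g(\nabla_{v_i} \nabla \ln f, v_i)$, using that $\nabla_{v_i}$ of a vertical vector stays vertical because $T$ vanishes; the horizontal piece $\mathcal{H} \nabla_{x_j} x_j$ is then acted on by the explicit formula and contributes terms quadratic in $g(\nabla \ln f, v_i)$ and $g(\nabla \ln f, J v_i)$. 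Second, for $\nabla_{x_j} \nabla_{v_i} x_j$ I would first substitute the explicit formula for $\nabla_{v_i} x_j$, then differentiate in the horizontal direction $x_j$ via the product rule; after pairing with $v_i$ only quadratic-in-gradient terms survive, and they cancel part of those produced in the first step. Third, since $[v_i, x_j]$ is vertical (a fact already used in the paper for basic horizontal extensions, where projection to $B$ kills the bracket), $\nabla_{[v_i, x_j]} x_j$ is again given by the explicit formula, and the bracket itself decomposes via $[v_i,x_j]=\nabla_{v_i}x_j-\nabla_{x_j}v_i$ so that the horizontal part of $\nabla_{x_j} v_i$, namely $A_{x_j} v_i=\mathcal{H}\nabla_{x_j}v_i$, is precisely what introduces the square $||A_{x_j} v_i||^2$ into the identity.

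The main obstacle is the bookkeeping of the several quadratic-in-$\nabla \ln f$ contributions from the three pieces: one must see that after cancellation only a single square remains, of the form $g(\nabla \ln f, U)^2$, where $U$ is determined by the K$\ddot{\text{a}}$hler pairing in $\nabla_V X$ (the natural candidate is $U = J v_i$, since the $g(\nabla \ln f, v_i)$ type terms should be absorbed into $g(\nabla_{v_i}\nabla \ln f, v_i)$ via the product rule $v_i\,g(\nabla \ln f, v_i) = g(\nabla_{v_i}\nabla \ln f, v_i) + g(\nabla \ln f, \nabla_{v_i}v_i)$ and the fact that $\nabla_{v_i}v_i$ is vertical). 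As a consistency check, in the Riemannian-submersion limit $f \equiv 1$ both the Hessian term and the square of the gradient vanish, and the identity reduces to the classical O'Neill formula $R(v,x,v,x) = ||A_x v||^2$ for submersions with totally geodesic fibers, which is the expected baseline.
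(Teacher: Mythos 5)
Your overall route is the same as the paper's: expand $R(v_i,x_j,v_i,x_j)$ from the definition of the curvature tensor, use Lemma \ref{A_1} to get $A_{x_j}x_j=-\tfrac{1}{2}\mathcal{V}\nabla\ln f$ (whence the Hessian and gradient-squared terms), and exploit the vanishing of $T$ together with the verticality of $[v_i,x_j]$; your consistency check against the classical O'Neill formula is also sound. However, the bookkeeping of which of the three pieces produces which term is wrong in a way that would derail the computation if carried out as written. The bracket term contributes nothing at all: since $[v_i,x_j]$ is vertical and $T=0$, the derivative $\nabla_{[v_i,x_j]}x_j$ of a horizontal field in a vertical direction is horizontal, so its inner product with $v_i$ vanishes identically. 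It therefore cannot be the source of $||A_{x_j}v_i||^2$. That term in fact comes from the middle piece $-g(\nabla_{x_j}\nabla_{v_i}x_j,v_i)$, which you dismiss as producing ``only quadratic-in-gradient terms that cancel'': setting $W=\mathcal{H}\nabla_{v_i}x_j=\mathcal{H}\nabla_{x_j}v_i=A_{x_j}v_i$ (using $\mathcal{H}[v_i,x_j]=0$) and integrating by parts,
\begin{equation*}
g(\nabla_{x_j}\nabla_{v_i}x_j,v_i)=g(\nabla_{x_j}W,v_i)=-g(W,\nabla_{x_j}v_i)=-||A_{x_j}v_i||^2,
\end{equation*}
which is exactly how the paper produces the $+||A_{x_j}v_i||^2$ contribution. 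Your ``quadratic in $g(\nabla\ln f,v_i)$ and $g(\nabla\ln f,Jv_i)$'' terms from this piece are this same quantity in disguise, via the explicit formula for $\nabla_{v_i}x_j$; they are not supposed to cancel against anything.

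Relatedly, in the first piece the horizontal part $\mathcal{H}\nabla_{x_j}x_j$ contributes nothing rather than further quadratic terms: for any horizontal $W$ one has $g(\nabla_{v_i}W,v_i)=v_i\,g(W,v_i)-g(W,\nabla_{v_i}v_i)=0$, because $\nabla_{v_i}v_i$ is vertical by $T=0$. So only $g(\nabla_{v_i}(A_{x_j}x_j),v_i)$ survives there, and there is no cancellation to arrange between the first and second pieces. Once these two attributions are corrected, your computation closes up exactly as in the paper's proof; as written, though, you would be hunting for the $A$-term inside an expression that is identically zero.
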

\begin{proof}
The detailed calculation is given in [7], we only sketch the main ideas here: For basic horizontal vector field $X$ and vertical vector field $U$ we have:
\begin{equation}\label{r}
    R(U,X,U,X)=g(R(U,X)X,U)=g(\nabla_U\nabla_XX,U)-g(\nabla_X\nabla_UX,U)-g(\nabla_{[U,X]}X,U)
\end{equation}
Since $T$ vanishes and $[U,X]\in \mathcal{V}M$ we have $\mathcal{V}\nabla_{[U,X]}X=0$,so the last term of $(\ref{r})$ vanishes.

Also:
\begin{equation}\label{B}
    g(\nabla_U\nabla_XX,U)=g(\nabla_U(\mathcal{H}\nabla_XX+\mathcal{V}\nabla_XX),V)=g(\nabla_U(A_XX),U).
\end{equation}
By $\textbf{Lemma \ref{A_1}}$ we have $A_XX=-\dfrac{1}{2}g(X,X)\nabla\ln f$, as $\pi$ is homothetic.Furthermore:
\begin{equation}\label{C}
    g(\nabla_X\nabla_UX,U)=g(\nabla_X(\mathcal{H}\nabla_UX),U)=-g(\mathcal{H}\nabla_UX,\nabla_XU)=-||A_XU||^2.
\end{equation}
Here the last equality comes from the fact that $0=\mathcal{H}[U,X]=\mathcal{H}\nabla_UX-\mathcal{H}\nabla_XU$. Replace $X,U$ by unit vector fields $v_i,x_j$ in $(\ref{B})$ and $(\ref{C})$ we have our desired identity.
\end{proof}
If we take a sequence of times $t_i \rightarrow T$, in the previous section we have the uniform upper bounds of $||\nabla\ln f||_{g(t)}$ and $||A||_{g(t)}$, the above implies that $R(v_i,x_j,v_i,x_j)=O(1)$ as $i\rightarrow \infty$.
Under the re-scaling procedure: $g_i(t)=K_ig(t_i+K_i^{-1}t)$, we have that: 
\begin{equation}\label{Rm}
K_i^{-2}||Rm||^2_{g(t_i)}=K_i^{-2}4R(u,v,u,v)^2|_{g(t_i)}+K_i^{-2}R(x_i,x_j,x_k,x_l)^2+O(K_i^{-2}).
\end{equation}
The terms $R(x_i,x_j,x_k,x_l)$ where $x_s$ is unit vector on the horizontal distribution are not important because eventually when we take limit as $i\to \infty$, they tend to the components of Riemannian tensor of the horizontal part of $M_\infty$, which has been shown to be flat by $\textbf{Theorem \ref{main theorem}}$ and thus its Riemannian tensor vanishes. 

With respect to the orthonormal basis, the scalar curvature is calculated as:
\begin{equation}
    R_{g(t)}=\sum_{i=1}^{2n}R(x_i,x_i)+R(v_1,v_1)+R(v_2,v_2).
\end{equation}
All terms $R(x_i,x_i)$ will converge to $0$ eventually on $M_{\infty}$, therefore we only need to take care of the last two terms, which are the dominant terms of $R_{g(t)}$:
\begin{equation}
R(v_1,v_1)=\sum_{i=1}^{2n}R(v_1,x_i,v_1,x_i)+R(v_1,v_2,v_2,v_2)=O(1)+R(v_1,v_2,v_1,v_2).
\end{equation}
Similarly:
\begin{equation}
    R(v_2,v_2)=O(1)+R(v_2,v_1,v_2,v_1).
\end{equation}
Since $R(v_1,v_2,v_1,v_2)=R(v_2,v_1,v_2,v_1)$, we have that:
\begin{equation}\label{scalar}
    R_{g(t_i)}=2R(v_1,v_2,v_1,v_2)+O(1)
\end{equation}
Observe that from $(\ref{Rm})$ and $(\ref{scalar})$ we have that essentially the dominant parts of $R_{g(t_i)}$ and $||Rm||_{g(t_i)}$ are the same. We are now in the position to exclude type II singularity:
\begin{Th}\label{exclusion of type II}
Suppose that $\pi: M^{n+1}\rightarrow B^n$ is dimension one fibers submersion and on $M$ we run a K$\ddot{\text{a}}$hler-Ricci flow which satisfies all conditions in the introduction. Then, the singularity cannot be of type II. 
\end{Th}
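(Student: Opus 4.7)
The plan is to argue by contradiction. Suppose the singularity is of type II, meaning $\limsup_{t\nearrow T}(T-t)\sup_M ||Rm||_{g(t)} = \infty$. I would begin by applying Hamilton's type II point-picking procedure to extract a sequence $(x_i,t_i) \in M\times[0,T)$ with $t_i \to T$ and $K_i := ||Rm(x_i,t_i)||_{g(t_i)} \to \infty$ for which the rescaled flows $g_i(t) := K_i g(t_i + K_i^{-1}t)$ enjoy uniform curvature bounds on intervals $[-\beta_i,\alpha_i]$ with $\beta_i,\alpha_i \to \infty$, and attain unit curvature at $(x_i,0)$. Hamilton's compactness theorem together with Perelman's no-local-collapsing and the uniform curvature bound then produce, after passing to a subsequence, a pointed Cheeger-Gromov limit $(M_\infty, g_\infty(t), x_\infty)$ which is a complete \emph{eternal} K\"ahler-Ricci solution whose curvature is bounded on space-time and attains its global maximum at $(x_\infty,0)$.

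Next, Theorem \ref{main theorem} applied to this sequence shows that the universal cover of $(M_\infty, g_\infty(t))$ splits isometrically as $(\mathbb{C}^n \times N_2, ||dz||^2 \oplus h_2(t))$, with $N_2$ of complex dimension one since the fibers of $\pi$ are of complex dimension one. Using the identities (\ref{Rm}) and (\ref{scalar}), together with the flatness of the first factor, one sees that the dominant contribution to both $||Rm||_{g_\infty(t)}^2$ and $R_{g_\infty(t)}$ comes from the single fiber component $R(v_1,v_2,v_1,v_2)$. Hence on the limit $||Rm||_{g_\infty(t)}$ and the scalar curvature of $h_2(t)$ agree up to a fixed dimensional constant; in particular $(N_2, h_2(t))$ is a complete eternal K\"ahler-Ricci flow on a Riemann surface with bounded positive curvature attaining its space-time maximum at a point.

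The main obstacle is identifying this limiting surface flow. Here I would invoke Hamilton's classification of complete eternal Ricci flows on surfaces: any such solution with bounded positive curvature achieving a space-time maximum is, up to scaling, the cigar soliton $\bigl(\mathbb{R}^2, \tfrac{|dz|^2}{1+|z|^2}\bigr)$. Consequently the universal cover of $(M_\infty, g_\infty(t))$ is isometric to $\mathbb{C}^n$ times the cigar. To finish the argument I would recall that the cigar is asymptotic to a half-cylinder of bounded radius and is therefore \emph{not} $\kappa$-noncollapsed on all scales; the same defect persists for the Riemannian product $\mathbb{C}^n \times$ cigar and is inherited by any quotient, in particular by $(M_\infty, g_\infty(t))$. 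On the other hand, Perelman's no-local-collapsing theorem, together with its stability under Cheeger-Gromov limits of blow-ups of a finite-time K\"ahler-Ricci flow, forces $(M_\infty, g_\infty(t))$ to be $\kappa$-noncollapsed for some $\kappa > 0$. This contradiction rules out the type II hypothesis, so the singularity must be of type I.
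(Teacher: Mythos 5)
Your proposal is correct and follows essentially the same route as the paper: contradiction via Hamilton's type II point-picking, the splitting from Theorem \ref{main theorem}, identification of the curvature of the limit with that of the surface factor $N_2$ via (\ref{Rm}) and (\ref{scalar}), Hamilton's classification of eternal surface solutions yielding the cigar, and the contradiction with Perelman's $\kappa$-noncollapsing. The only step you gloss over is why the curvature of $h_2(t)$ is strictly positive rather than identically zero; the paper supplies this via the strong maximum principle together with the normalization $\|Rm\|_{g_\infty(0)}(x_\infty)=1$.
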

\begin{proof}
We will prove by contradiction. Suppose that the singularity is indeed of type II, then the standard point-picking is performed as follows. First, we take $t_i\to T$ and let $(x_i,t_i)\in M\times [0,T_i]$ such that:
\begin{equation}
    (T_i-t_i)||Rm||(x_i,t_i)=\max_{M\times [0,T_i]}(T_i-t)||Rm||_{g(t)}.
\end{equation}
The procedure of choosing $K_i$ guarantees that $1\geq K_i^{-2}||Rm||^2_{g(t_i)}$ on $M$ and the equality holds at $x_i$. Now as $i\rightarrow \infty$, by $\textbf{Theorem \ref{main theorem}}$ we know that $M_{\infty}$ splits into $(\mathbb{C}^n\times N_2, ||dz||^2\oplus h_2(t))$. From the asymptotic behaviors of $||Rm||$ and $R$ in $(\ref{Rm})$ and $(\ref{scalar})$ we get that:
\begin{equation}\label{positive of R}
    R_{g_\infty(t)}=R_{N_2}(h_2(t))\leq 1,
\end{equation}
and the equality holds at $t=0$ and $x=x_\infty$. By strong maximal principle, the scalar curvature of every ancient solution must be either identically $0$ or positive the whole time. By $(\ref{positive of R})$, we can conclude that:
\begin{equation}\label{bound of R}
    0<R_{h_2(t)}\leq 1.
\end{equation}

Therefore, $(N_2,h_2(t))$ is an eternal solution which satisfies $(\ref{bound of R})$ and thus by Hamilton's classification of eternal solutions in \cite{12}, $(N_2,h_2(t))$ is a steady gradient soliton. In our initial setting of complex dimensions $1$-fiber submersion, we have $\dim_\mathbb{R}N_2=2$ and thus it is a cigar soliton by \cite{13}. This together with our splitting lemma violates the Perelman's non local collapsing in \cite{14} that the Cheeger-Gromov limit $(M_\infty, g_\infty(t))$ must be $\kappa$-non-collapsed at all scales. Therefore, we ended up reaching a contradiction and thus type II singularity is not possible.
\end{proof}
We can finally give a proof of $\textbf{Theorem \ref{singularity}}$:
\begin{proof}
The convergence in the Cheeger-Gromov sense is guaranteed by Hamilton's compactness and Perelman's non local collapsing theorems. The analysis of this limit proceeds similarly as in our previous proof of the exclusion of type II. Precisely by the choice of $K_i$ and $(x_i,t_i)$ we have:
\begin{equation}
    K_i^{-2}||Rm||_{g(t_i)}\leq 1
\end{equation}
and the equality happens at $(x_i,t_t)$. Taking limit of this as $i\to \infty$ and comparing the dominant term of $||Rm||_{g(t_t)}$ with that of the scalar curvature $R_{g(t_i)}$ we shall also have that:
\begin{equation}
    R_{g_\infty(t)}=R_{N_2}(h_2(t))\leq 1
\end{equation}
Here we used the fact that the Cheeger-Gromov limit splits as: \[(\mathbb{C}^n\times N_2,||dz||^2\oplus h_2(t)).\]
Since the flow on $M$ is of type I singularity, we have that $N_2,h_2(t)$ is a type I eternal solution to Ricci flow on surface. Therefore by Hamilton's classification of ancient $\kappa$-solution, $(N_2,h_2(t))$ must be isometric to the shrinking round sphere. 

Together with $\textbf{Theorem \ref{main theorem}}$, we conclude that under our assumption on conformal submersion and dimension one fiber, the K$\ddot{\text{a}}$hler-Ricci flow is of type I singularity and the universal cover of the limit solution $(M_\infty, g_\infty(t))$ splits isometrically as:
\[(\mathbb{C}^n\times \mathbb{CP}^1, ||dz||^2\oplus\omega_{FS}(t).\]
\end{proof}

\bibliographystyle{alpha}
\bibliography{references}

\begin{thebibliography}{Ham93b}

\bibitem[CLN06]{6}
Bennett Chow, Peng Lu, and Lei Ni.
\newblock {\em Hamilton's Ricci flow}, volume~77.
\newblock American Mathematical Soc., 2006.

\bibitem[Fon14]{1}
Frederick Tsz-Ho Fong.
\newblock K\"{a}hler-{R}icci flow on projective bundles over
  {K}\"{a}hler-{E}instein manifolds.
\newblock {\em Trans. Amer. Math. Soc.}, 366(2):563--589, 2014.

\bibitem[Gud92]{7}
Sigmundur Gudmundsson.
\newblock {\em The geometry of harmonic morphisms}.
\newblock PhD thesis, University of Leeds (Department of Pure Mathematics),
  1992.

\bibitem[Ham93a]{13}
Richard Hamilton.
\newblock The formations of singularities in the ricci flow.
\newblock {\em Surveys in differential geometry}, 2(1):7--136, 1993.

\bibitem[Ham93b]{12}
Richard~S Hamilton.
\newblock Eternal solutions to the ricci flow.
\newblock {\em Journal of Differential Geometry}, 38(1):1--11, 1993.

\bibitem[Okr98]{15}
Sergey~Ivanovich Okrut.
\newblock The conformal submersions of k{\"a}hlerian manifolds. i.
\newblock {\em Zhurnal Matematicheskoi Fiziki, Analiza, Geometrii [Journal of
  Mathematical Physics, Analysis, Geometry}, 5(3):228--249, 1998.

\bibitem[OW07]{4}
Ye-Lin Ou and Frederick Wilhelm.
\newblock Horizontally homothetic submersions and nonnegative curvature.
\newblock {\em Indiana University Mathematics Journal}, 56(1):243--261, 2007.

\bibitem[Per02]{14}
Grisha Perelman.
\newblock The entropy formula for the ricci flow and its geometric
  applications.
\newblock {\em arXiv preprint math/0211159}, 2002.

\bibitem[Son15]{10}
Jian Song.
\newblock Some type $\mathrm{I}$ solutions of ricci flow with rotational
  symmetry.
\newblock {\em International Mathematics Research Notices},
  2015(16):7365--7381, 2015.

\bibitem[SSW13]{5}
J.~{Song}, G.~{Szekelyhidi}, and B.~{Weinkove}.
\newblock The k{\"a}hler-ricci flow on projective bundles.
\newblock {\em International Mathematics Research Notices}, 2013(2):243--257,
  2013.

\bibitem[SW11]{2}
Jian Song and Ben Weinkove.
\newblock The k{\"a}hler--ricci flow on hirzebruch surfaces.
\newblock 2011.

\bibitem[TZ06]{8}
Gang Tian and Zhou Zhang.
\newblock On the k{\"a}hler-ricci flow on projective manifolds of general type.
\newblock {\em Chinese Annals of Mathematics, Series B}, 27(2):179--192, 2006.

\bibitem[Vil70]{3}
Jaak Vilms.
\newblock Totally geodesic maps.
\newblock {\em Journal of differential geometry}, 4(1):73--79, 1970.

\bibitem[Zaw14]{11}
Tomasz Zawadzki.
\newblock Existence conditions for conformal submersions with totally umbilical
  fibers.
\newblock {\em Differential Geometry and its Applications}, 35:69--85, 2014.

\bibitem[Zhu07]{9}
Xiaohua Zhu.
\newblock K{\"a}hler-ricci flow on a toric manifold with positive first chern
  class.
\newblock {\em arXiv preprint math/0703486}, 2007.

\end{thebibliography}
\nocite{*}
\Addresses
\end{document}